\newtheorem{MainThm}{Theorem}
\newtheorem{MainCor}[MainThm]{Corollary}
\newtheorem{thm}{Theorem}[section]
\newtheorem{cor}[thm]{Corollary}
\newtheorem{lem}[thm]{Lemma}
\newtheorem{prop}[thm]{Proposition}
\theoremstyle{definition}
\newtheorem{defn}[thm]{Definition}
\theoremstyle{remark}
\newtheorem{rem}[thm]{Remark}
\numberwithin{equation}{section}
\newcommand{\bC}{\mathbb{C}}
\newcommand{\bP}{\mathbb{P}}
\newcommand{\bQ}{\mathbb{Q}}
\newcommand{\bR}{\mathbb{R}}
\newcommand{\bZ}{\mathbb{Z}}
\newcommand{\MT}[2]{\bold{MT #1}(#2)}
\newcommand{\MTtheta}{\bold{MT \theta}}
\newcommand\lra{\longrightarrow}
\newcommand\lla{\longleftarrow}
\newcommand\Diff{\mathrm{Diff}}
\newcommand\Bun{\mathrm{Bun}}
\newcommand{\hcoker}{/\!\!/}
\newcommand{\CircNum}[1]{\ooalign{\hfil\raise .00ex\hbox{\scriptsize #1}\hfil\crcr\mathhexbox20D}}
\newcommand{\map}{\mathrm{map}}
\newcommand{\Fr}{\mathrm{Fr}}
\newcommand{\Spin}{\mathrm{Spin}}
\title{The space of immersed surfaces in a manifold}
\author{Oscar Randal-Williams}
\thanks{The author was supported by ERC Advanced Grant No.\ 228082, and the Danish National Research Foundation via the Centre for Symmetry and Deformation.}
\email{o.randal-williams@math.ku.dk}
\address{Institut for Matematiske Fag\\
Universitetsparken 5\\
DK-2100 K{\o}benhavn {\O}\\
Denmark}
\date{\today}
\subjclass[2010]{
58D10,  
58D29,  
57R15,  
55P47}  
\begin{document}

\begin{abstract}
We study the cohomology of the space of immersed genus $g$ surfaces in a simply-connected manifold. We compute the rational cohomology of this space in a stable range which goes to infinity with $g$. In fact, in this stable range we are also able to obtain information about torsion in the cohomology of this space, as long as we localise away from $(g-1)$.
\end{abstract}
\maketitle

\section{Introduction and statement of results}
Let $M$ be a smooth manifold, not necessarily compact and possibly with boundary, and write $\mathring{M}$ for its interior. Let $\Sigma_g$ be a closed orientable surface of genus $g$. The \emph{space of immersed surfaces of genus $g$ in $M$} is defined to be the quotient space
$$\mathcal{I}_g(M) := \mathrm{Imm}(\Sigma_g, \mathring{M}) / \Diff^+(\Sigma_g),$$
so that a point in $\mathcal{I}_g(M)$ is represented by an unparametrised immersed oriented surface of genus $g$ in the interior of $M$. 

We propose to study the cohomology of this space. The differential topology of such spaces of unparametrised immersions has been studied in detail by Cervera--Mascar{\'o}--Michor \cite{CMM} and Michor--Mumford \cite{MichorMumford}, and the most elementary observation is that the action of $\Diff^+(\Sigma_g)$ on $\mathrm{Imm}(\Sigma_g, \mathring{M})$ is not free: for example, if an immersion is a covering space of its image, the group of covering transformations lies in the stabiliser. Thus the homotopy-type of $\mathcal{I}_g(M)$ is not as directly related to the homotopy-types of $\mathrm{Imm}(\Sigma_g, \mathring{M})$ and $\Diff^+(\Sigma_g)$ as one might like. To exert better homotopical control we may work with the homotopy quotient (i.e.\ Borel construction),
$$\mathcal{I}_g^h(M) := \mathrm{Imm}(\Sigma_g, \mathring{M}) \hcoker \Diff^+(\Sigma_g),$$
which enjoys better formal properties although it is perhaps not so geometrically meaningful. (In fact, $\mathrm{Imm}(\Sigma_g, \mathring{M})$ admits the structure on an infinite-dimensional smooth manifold, and the action of $\Diff^+(\Sigma_g)$ on $\mathrm{Imm}(\Sigma_g, \mathring{M})$ is smooth, proper, and has slices, so $\mathcal{I}_g(M)$ is the coarse space of the infinite-dimensional translation orbifold, and $\mathcal{I}_g^h(M)$ is the homotopy-type of this orbifold.) In order to relate this auxiliary space to the problem at hand, in Section \ref{app:1} we apply the theory developed by Cervera--Mascar{\'o}--Michor to prove the first part of the following theorem.

\begin{MainThm}\label{thm:Orbifold}
For $g \geq 2$, the map $H^*(\mathcal{I}_g(M);\bZ[\tfrac{1}{g-1}]) \to H^*(\mathcal{I}^h_g(M);\bZ[\tfrac{1}{g-1}])$ is an isomorphism. On the other hand, for every prime $p$ dividing $(g-1)$ the map $H^*(\mathcal{I}_g(M);\bZ_{(p)}) \to H^*(\mathcal{I}^h_g(M);\bZ_{(p)})$ is not surjective.
\end{MainThm}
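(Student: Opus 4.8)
Write $q\colon\mathcal{I}^h_g(M)\to\mathcal{I}_g(M)$ for the natural map. Everything is driven by the local structure of $q$ furnished by Cervera--Mascar\'o--Michor: the $\Diff^+(\Sigma_g)$-action on $\mathrm{Imm}(\Sigma_g,\mathring M)$ is smooth and proper and has slices, so $\mathcal{I}_g(M)$ is metrisable and locally contractible, and near a point $[f]$ it is modelled on $S_f/G_f$, where $G_f:=\mathrm{Stab}(f)$ and $S_f$ is a slice which one may take to be a ball in the space of sections of the normal bundle $\nu_f$ with its linear $G_f$-action; correspondingly $q$ is locally modelled on $S_f\hcoker G_f\to S_f/G_f$, where $S_f$ and $S_f/G_f$ are contractible and $S_f\hcoker G_f\simeq BG_f$. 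The point that brings in $g-1$ is that any nontrivial $\phi\in G_f$ is fixed-point-free: where $f$ is an embedding the relation $f\circ\phi=f$ forces $\phi=\mathrm{id}$, and the set $\{\phi=\mathrm{id}\}$ is then both open and closed. Hence $G_f$ acts freely on $\Sigma_g$, so $G_f$ is finite of order dividing $g-1$ by multiplicativity of the Euler characteristic.

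For the isomorphism: the stalk of $Rq_*\underline R$ at $[f]$ is $\varinjlim_{U\ni[f]} H^*(q^{-1}U;R)\cong H^*(BG_f;R)$, which for $R:=\bZ[\tfrac1{g-1}]$ is $R$ concentrated in degree $0$, since $|G_f|$ divides $g-1$ and is therefore a unit in $R$. Thus $Rq_*\underline R=\underline R$, and the Leray spectral sequence of $q$ identifies $q^*$ with an isomorphism $H^*(\mathcal{I}_g(M);R)\xrightarrow{\sim}H^*(\mathcal{I}^h_g(M);R)$; the same holds with $\bQ$-coefficients.

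For the non-surjectivity, fix a prime $p\mid g-1$. Composing a free $\bZ/p$-action on $\Sigma_g$ (quotient $\Sigma_{g'}$ with $g'=1+\tfrac{g-1}{p}\ge 2$) with an embedding of $\Sigma_{g'}$ into a chart of $\mathring M$ (using $\dim M\ge 3$) gives an immersion $f$ with $G_f\cong\bZ/p$. Choosing a slice neighbourhood $U\ni[f]$, the set $U$ is contractible while $q^{-1}(U)\simeq B\bZ/p$, giving $j\colon B\bZ/p\simeq q^{-1}(U)\hookrightarrow\mathcal{I}^h_g(M)$; since $q\circ j$ is null-homotopic (it lands in the contractible $U$), surjectivity of $q^*$ would force $j^*$ to vanish in positive degrees. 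It does not: let $c\colon\mathcal{I}^h_g(M)\to B\Diff^+(\Sigma_g)$ classify the universal surface bundle and set $\lambda:=c^*\,c_N(\mathbb H)\in H^{2N}(\mathcal{I}^h_g(M);\bZ_{(p)})$, where $\mathbb H$ is the flat bundle with fibre $H_1(\Sigma_g;\bC)$ and $N:=\tfrac{2(p-1)(g-1)}{p}\ge 2$. The composite $c\circ j$ classifies our free $\bZ/p$-action, under which $H_1(\Sigma_g;\bC)$ restricts, by Chevalley--Weil, to $\bC^2\oplus\bC[\bZ/p]^{\oplus 2(g'-1)}$; in $H^*(B\bZ/p;\bZ)=\bZ[c]/(pc)$ its total Chern class is $\prod_{j=1}^{p-1}(1+jc)^{2(g'-1)}=(1-c^{p-1})^{2(g'-1)}$, using $\prod_{j=1}^{p-1}(1+jc)=1-c^{p-1}$ (the lower symmetric functions of $1,\dots,p-1$ vanish mod $p$ and $(p-1)!\equiv-1$). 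Its component in degree $2N$ is $c^N\neq0$, so $j^*\lambda=c^N\neq0$ --- a contradiction.

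The main obstacle is this last step. Tautological classes cannot work: over $q^{-1}(U)\simeq B\bZ/p$ the universal surface bundle is a \emph{free} $\bZ/p$-quotient of $\Sigma_g$, hence has the cohomology of the surface $\Sigma_{g'}$, so every fibre integration of a positive-degree class --- in particular every $\kappa$-class, and every class pulled back along the evaluation map to $M$ --- restricts to zero there; one must instead use a class built from the homology of the fibre, and $g'\ge2$ is precisely what keeps $c_N(\mathbb H)$ in positive degree. A secondary difficulty is the infinite-dimensional differential topology behind the slice theorem (properness, metrisability of $\mathcal{I}_g(M)$, the structure of the slices), for which one relies on Cervera--Mascar\'o--Michor.
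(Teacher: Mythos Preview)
Your proof is correct. The first half (the Leray spectral sequence argument using slices and the divisibility $|G_f|\mid g-1$) is essentially identical to the paper's.

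For the second half your route diverges from the paper's. Both arguments produce a map $j\colon B\bZ/p\to\mathcal I_g^h(M)$ from a free $\bZ/p$-action on $\Sigma_g$ (the paper via a fibrewise immersion over $B\bZ/(g-1)$ and then restricting to a $p$-Sylow, you via an embedded quotient and the slice neighbourhood), observe that $q\circ j$ is null, and then must show $j^*$ is nonzero in positive degrees. The paper does this abstractly: it factors $j$ through $B\Gamma_g$, notes that $\Gamma_g$ has a torsion-free normal subgroup $\Delta$ of finite index so that $\bZ/p\hookrightarrow\Gamma_g/\Delta$, and invokes Swan's theorem that an injection of finite groups is nontrivial on integral cohomology in infinitely many degrees. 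You instead exhibit an explicit witness: the Chern class $c_N$ of the Hodge bundle $\mathbb H$, with the Chevalley--Weil decomposition $H_1(\Sigma_g;\bC)\cong\bC^2\oplus\bC[\bZ/p]^{2(g'-1)}$ and the mod-$p$ identity $\prod_{j=1}^{p-1}(1+jc)=1-c^{p-1}$ giving $j^*c_N(\mathbb H)=c^N\neq 0$. Your approach is more hands-on and produces a concrete class; the paper's is shorter and yields nontriviality in infinitely many degrees at once, at the cost of citing Swan. Your closing remark that $\kappa$-classes cannot serve as witnesses (because the fibrewise bundle over $B\bZ/p$ is a genuine surface bundle with closed fibre $\Sigma_{g'}$) is a nice complement that the paper does not make explicit.

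One small point: both you and the paper tacitly assume that an immersion of the quotient surface into $M$ exists (you phrase this as $\dim M\geq 3$, the paper simply writes ``choosing an immersion $\Sigma_2\looparrowright M$''); without this the second assertion is vacuously false when $\mathcal I_g(M)=\emptyset$, so some such hypothesis is implicit in the statement.
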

\noindent Thus to study the cohomology of $\mathcal{I}_g(M)$ with $\bZ[\tfrac{1}{g-1}]$ coefficients it is enough to study the cohomology of $\mathcal{I}_g^h(M)$. We are mainly interested in rational cohomology, so this is no restriction, although our methods will also provide certain torsion information.

Our strategy is to compare the space $\mathcal{I}_g^h(M)$ with a certain universal space which is independent of $g$, much as in Madsen and Weiss' proof of the Mumford conjecture \cite{MW}. This space is an infinite loop space, and we may describe it as follows.

Let $\mathrm{Gr}_2^+(TM)$ denote the Grassmannian of oriented 2-planes in the tangent bundle of $M$. That is, a point is given by a point $x \in M$ and an oriented 2-plane $L \subset T_xM$. This space has an evident 2-plane bundle over it (with fibre over the point $(x, L)$ given by the vector space $L$) which we denote $\ell$, and hence a classifying map
$$\theta_M : \mathrm{Gr}_2^+(TM) \lra BSO(2).$$
Denote by $\MTtheta_M$ the Thom spectrum of the virtual bundle $-\ell \to \mathrm{Gr}_2^+(TM)$. There is a natural homomorphism
$$E : \pi_0(\Omega^\infty \MTtheta_M) \lra \bZ,$$
which we will describe in Section \ref{sec:components}, and we denote by $\Omega^\infty_{n} \MTtheta_M$ the collection of path components mapping to $n \in \bZ$.

\begin{MainThm}\label{thm:Main}
For any manifold $M$, there is a map
$$\alpha_M : \mathcal{I}^h_g(M) \lra \Omega^\infty_{1-g} \MTtheta_M.$$
If $M$ is simply-connected and of dimension at least three, the map induces an integral homology isomorphism in degrees 
$$* \leq \begin{cases}
\tfrac{2g-6}{5} & \text{if $\dim(M) = 3$}\\
\tfrac{2g-3}{3} & \text{if $\dim(M) > 3$}.
\end{cases}$$
\end{MainThm}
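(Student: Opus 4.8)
The plan is to follow the scanning/cobordism-category strategy of Galatius--Madsen--Tillmann--Weiss and Galatius--Randal-Williams, adapted to surfaces that carry a tangential structure recording how they sit inside $M$. The key observation is that an immersed surface $\Sigma \looparrowright \mathring M$ is the same data as a surface equipped with a bundle map $T\Sigma \oplus \varepsilon^0 \to \theta_M^*\ell \oplus (\text{normal data})$ — more precisely, an immersion into $M$ induces a map $\Sigma \to \mathrm{Gr}_2^+(TM)$ classifying the tangent planes, covered by an isomorphism $T\Sigma \cong \ell$, together with the extra information of an immersion of the normal bundle. Since immersions into $\mathring M$ are governed by Smale--Hirsch theory, $\mathrm{Imm}(\Sigma_g,\mathring M)$ is (equivariantly) weakly equivalent to the space of bundle monomorphisms $T\Sigma_g \hookrightarrow TM$, and hence $\mathcal{I}^h_g(M)$ is a space of surfaces in the universe $\bR^\infty$ equipped with a $\theta_M$-structure in the sense of a lift of the Gauss map along $\theta_M : \mathrm{Gr}_2^+(TM) \to BSO(2)$. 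First I would make this identification precise: produce a model for $\mathcal{I}^h_g(M)$ as a component of the classifying space of a cobordism category $\mathcal{C}_{\theta_M}$ of surfaces with $\theta_M$-structure, exactly as the moduli space of $\theta$-surfaces is modelled in the work on stable moduli spaces of manifolds with tangential structure.

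Next I would invoke the main theorem of that theory — the Galatius--Madsen--Tillmann--Weiss theorem in the generalised form for tangential structures — to identify $B\mathcal{C}_{\theta_M} \simeq \Omega^{\infty-1}\MTtheta_M$, where $\MTtheta_M$ is precisely the Thom spectrum of $-\ell$ over $\mathrm{Gr}_2^+(TM)$ as defined in the excerpt. Taking the loop space and restricting to the appropriate path component (the one hit by genus-$g$ surfaces, which by the computation of $E$ in Section \ref{sec:components} is $\Omega^\infty_{1-g}\MTtheta_M$) produces the map $\alpha_M$. This part is essentially formal once the cobordism-category model is set up. The homological statement then requires a \emph{homological stability} input: one needs that the moduli space of genus $g$ surfaces with $\theta_M$-structure, i.e. $\mathcal{I}^h_g(M)$, has homology independent of $g$ in a range, and that this stable homology agrees with that of the relevant component of the infinite loop space. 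For $\theta$-structures this is the content of the Galatius--Randal-Williams homological stability and group-completion results, which give a stability range of the shape $* \leq (g-c)/k$; the specific constants $2g-6 \over 5$ and $2g-3 \over 3$ should fall out of tracking the connectivity of the relevant spaces, which depends on the connectivity of the fibre of $\theta_M$, and this is where the hypotheses that $M$ is simply connected and $\dim M \geq 3$ (resp. $>3$) enter — they control the connectivity of $\mathrm{Gr}_2^+(TM) \to BSO(2)$ and hence of $\theta_M$.

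The main obstacle, I expect, is twofold. First, one must verify that the general machinery genuinely applies to $\theta_M$: the tangential structure $\theta_M : \mathrm{Gr}_2^+(TM) \to BSO(2)$ is typically \emph{not} a fibration with highly-connected fibre in the naive sense (its fibre over a point is a copy of $M$, which can have all sorts of homology), so one has to be careful about which moduli space the GMTW/GRW theorems compute and which stability theorem applies — specifically one wants the ``$\theta$-structures on surfaces with one boundary component'' version and must check that the surface bundle with $\theta_M$-structure classified by $\mathcal{I}^h_g(M)$ really does sit inside this framework. Second, and relatedly, obtaining the \emph{precise} numerical ranges requires combining the stability range (which for surfaces is roughly $* \le (2g-2)/3$ in the best case) with the connectivity estimate for the map from the surface to its ``formal'' replacement coming from Smale--Hirsch theory and the connectivity of $\mathrm{Gr}_2^+(TM)$; the dimension-$3$ case is worse because the normal bundle is a line bundle and the Grassmannian $\mathrm{Gr}_2^+(T^3) \simeq \mathrm{Gr}_2^+(\bR^3)\times M \simeq S^2 \times M$ (locally) contributes a $2$-dimensional ``error'' that degrades the range, explaining the factor of $5$ rather than $3$. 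Carefully bookkeeping these two inputs against each other to land on exactly $\tfrac{2g-6}{5}$ and $\tfrac{2g-3}{3}$ is the delicate part; everything else is an application of now-standard scanning and group-completion technology.
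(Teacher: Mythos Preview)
Your overall architecture---Hirsch--Smale to replace immersions by bundle maps (i.e.\ $\theta_M$-structures), then the Pontrjagin--Thom/GMTW map to $\Omega^\infty\MTtheta_M$, then the homological stability theorem of \cite{R-WResolution} for moduli of $\theta$-surfaces---is exactly what the paper does. The gap is in your account of where the precise ranges come from, and in particular your diagnosis of the dimension-$3$ case.

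The stability theorem of \cite{R-WResolution} is \emph{not} governed by the connectivity of the fibre of $\theta_M$; it is governed by the behaviour of the sets $\pi_0(\mathcal{M}^{\theta_M}(\Sigma_{g,b};\delta))$ under the stabilisation maps (gluing on pairs of pants and discs). The substance of the paper's argument is therefore an explicit computation of these $\pi_0$'s, carried out in Propositions~\ref{prop:5mfld}--\ref{prop:dim4} and Corollary~\ref{cor:pi0stab}. For $\dim M \geq 5$ one finds $\pi_0 \cong H_2(M;\bZ)$ and every stabilisation is a bijection on $\pi_0$; for $\dim M = 4$ there is an extra integer (the normal Euler number) but stabilisations are still bijections on $\pi_0$; for $\dim M = 3$ an immersion canonically equips the surface with a Spin structure, so $\pi_0$ splits as $\mathrm{Spin}(\Sigma_{g,b};*) \times H_2(M;\bZ)$ and stabilisations act on $\pi_0$ exactly as they do for the tangential structure $B\Spin(2)$. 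The simple-connectivity hypothesis on $M$ is used precisely here, to make boundary conditions standard and to identify $\pi_0(\map_\partial(\Sigma_{g,b},M)) \cong H_2(M;\bZ)$. The comparison principle of \cite[\S 7]{R-WResolution} then says the stability range equals that of the simpler structure with the same $\pi_0$-behaviour: oriented surfaces ($BSO(2) \times M$) when $\dim M \geq 4$, giving the $\tfrac{2g-3}{3}$ range, and Spin surfaces ($B\Spin(2) \times M$) when $\dim M = 3$, giving the $\tfrac{2g-6}{5}$ range via \cite{RWFramedPinMCG}. So the denominator $5$ has nothing to do with the geometry of $\mathrm{Gr}_2^+(\bR^3)$ or a ``$2$-dimensional error'' from the normal line bundle; it is the Arf invariant of the induced Spin structure, which is not preserved by all stabilisations and hence degrades the range exactly as it does for Spin mapping class groups.
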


This theorem will be a consequence of the author's homology stability theorem for moduli spaces of surfaces with tangential structure \cite{R-WResolution}, along with the Hirsch--Smale theory of immersions \cite{Hirsch}. The Hirsch--Smale theory relates the space $\mathrm{Imm}(\Sigma_g, M)$ to a space of bundle-theoretic data, which we in turn identify with a space of $\theta_M$-reductions of $T\Sigma_g$. In order to apply the homology stability theorem of \cite{R-WResolution}, in Section \ref{sec:model} we give another model of $\mathcal{I}_g^h(M)$, which extends to the case of surfaces with boundary, and in Section \ref{sec:components} we compute the set of path components of this space.

Once homology stability is known, the methods of Galatius--Madsen--Tillmann--Weiss \cite{GMTW} identify the stable homology with that of the infinite loop space of the spectrum $\MTtheta_M$. Combining this theorem with Theorem \ref{thm:Orbifold}, we obtain the calculation

\begin{MainCor}\label{cor:Main}
There is an isomorphism of algebras
$$H^*(\mathcal{I}_g(M);\bZ[\tfrac{1}{g-1}]) \cong H^*(\Omega^\infty_{1-g} \MTtheta_M;\bZ[\tfrac{1}{g-1}])$$
in the stable range given by Theorem \ref{thm:Main}.
\end{MainCor}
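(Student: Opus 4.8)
The plan is to deduce Corollary \ref{cor:Main} by simply chaining together the two previous Main Theorems with the stable homology computation alluded to in the introduction. First I would invoke Theorem \ref{thm:Orbifold}: for $g \geq 2$ the natural map
$$H^*(\mathcal{I}_g(M);\bZ[\tfrac{1}{g-1}]) \lra H^*(\mathcal{I}^h_g(M);\bZ[\tfrac{1}{g-1}])$$
is an isomorphism of graded rings (it is induced by a continuous map, hence a ring map). This reduces the problem to computing $H^*(\mathcal{I}^h_g(M);\bZ[\tfrac{1}{g-1}])$ in the stable range. Next I would apply Theorem \ref{thm:Main}: the map $\alpha_M : \mathcal{I}^h_g(M) \to \Omega^\infty_{1-g} \MTtheta_M$ is an integral homology isomorphism in the stated range of degrees, hence also an isomorphism on cohomology with any coefficients, in particular with $\bZ[\tfrac{1}{g-1}]$ coefficients, in that range. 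Composing, we get an isomorphism
$$H^*(\mathcal{I}_g(M);\bZ[\tfrac{1}{g-1}]) \cong H^*(\Omega^\infty_{1-g} \MTtheta_M;\bZ[\tfrac{1}{g-1}])$$
in the stable range.

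The one point requiring a little care is the claim that this is an isomorphism \emph{of algebras}, rather than merely of graded modules. Here I would note that both $\mathcal{I}_g(M)$ and $\Omega^\infty_{1-g}\MTtheta_M$ carry cup-product ring structures on their cohomology (functorially induced by the diagonal), that all the maps in sight --- the comparison map of Theorem \ref{thm:Orbifold} and the map $\alpha_M$ of Theorem \ref{thm:Main} --- are maps of spaces and hence induce ring homomorphisms on cohomology, and that a ring homomorphism which is additionally a degreewise isomorphism in a range of degrees is a fortiori an isomorphism of graded rings in that range. Thus the composite isomorphism is multiplicative. It is worth remarking that the stable range is exactly the one in Theorem \ref{thm:Main}, since Theorem \ref{thm:Orbifold} holds in all degrees and contributes no further restriction.

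There is essentially no obstacle here: the corollary is a formal consequence of results already in hand. If anything, the only subtlety worth flagging explicitly is that Theorem \ref{thm:Main} is stated as an \emph{integral} homology isomorphism, so one should pass to cohomology with $\bZ[\tfrac{1}{g-1}]$ coefficients by the universal coefficient theorem (or simply observe that a map inducing an isomorphism on integral homology in a range induces an isomorphism on cohomology with arbitrary coefficients in the same range, by naturality of the universal coefficient spectral sequence or by a direct argument with the mapping cone). One should also keep in mind that $\alpha_M$ lands in the particular collection of path components $\Omega^\infty_{1-g}\MTtheta_M$ indexed by $1-g$ under the homomorphism $E$ of Section \ref{sec:components}, which is why the genus $g$ appears on the right-hand side; this is already built into the statement of Theorem \ref{thm:Main} and needs no extra work.
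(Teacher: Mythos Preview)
Your proposal is correct and matches the paper's approach exactly: the paper simply says ``Combining this theorem with Theorem \ref{thm:Orbifold}, we obtain the calculation'' immediately before stating the corollary, and you have spelled out precisely that combination (Theorem \ref{thm:Orbifold} to pass from $\mathcal{I}_g(M)$ to $\mathcal{I}_g^h(M)$, Theorem \ref{thm:Main} to pass to $\Omega^\infty_{1-g}\MTtheta_M$, universal coefficients to go from homology to cohomology, and naturality of cup products for the algebra structure). There is nothing to add.
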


For this theorem to be useful we must be able to compute the right-hand side. In general this is difficult, but for rational cohomology it is easy. Let us write $\Omega^\infty_\bullet\MTtheta_M$ for the basepoint component of the infinite loop space associated to the spectrum $\MTtheta_M$. There is a composition
$$H^{*+2}(\mathrm{Gr}_2^+(TM);\bQ) \cong H^*(\MTtheta_M;\bQ) \overset{\sigma}\lra H^*(\Omega^\infty_\bullet\MTtheta_M;\bQ)$$
where the first map is the Thom isomorphism and the second map is the cohomology suspension. The right-hand side is a graded commutative algebra and so this extends to a map
$$\Lambda(H^{*+2 > 2}(\mathrm{Gr}_2^+(TM);\bQ)) \lra H^*(\Omega^\infty_\bullet\MTtheta_M;\bQ)$$
from the free graded commutative algebra on the vector space of positive degree elements. This is an isomorphism.

We now give an application of this result, by describing $H^*(\mathcal{I}_g(\bR^n);\bQ)$ in the stable range. In order to do so we first describe a relationship between the cohomology of the moduli space of Riemann surfaces and the cohomology of $\mathcal{I}_g(M)$ for any manifold $M$.

\subsection{Relation to moduli spaces of curves}

Let $M$ be a manifold equipped with a Riemannian metric. There is then a continuous map
$$r : \mathcal{I}_g(M) \lra \mathcal{M}_g$$
from the space of unparamerised immersions to the coarse moduli space of Riemann surfaces of genus $g$. The map is defined as follows:

Let $i : \Sigma_g \looparrowright M$ be an immersion. The Riemannian metric on $M$ induces one on $\Sigma_g$, which in turn gives an almost complex structure on $\Sigma_g$. By the Newlander--Nirenberg theorem \cite{NewlanderNirenberg} and dimensional reasons (the Nijenhuis tensor has rank $(1,2)$), all almost-complex structures on a smooth surface are integrable, and so we obtain a unique complex structure on $\Sigma_g$, and hence a point in $\mathcal{M}_g$. This defines a function $\mathrm{Imm}(\Sigma_g, M) \to \mathcal{M}_g$, and it is easily checked to be continuous. Precomposing $i$ with a diffeomorphism of $\Sigma_g$ does not change this point, and so we get a well-defined map on the quotient space. 

The map $r$ depends on the original choice of Riemannian metric on $M$, but as the space of metrics is connected (in fact, contractible) it is well-defined up to homotopy, and endows $H^*(\mathcal{I}_g(M);\bQ)$ with the structure of an algebra over $H^*(\mathcal{M}_g;\bQ)$. Recall that there are defined the so-called Mumford--Morita--Miller classes \cite{Mumford}
$$\kappa_i \in H^{2i}(\mathcal{M}_g;\bQ)$$
and that by the theorem of Madsen and Weiss \cite{MW}, the map
$$\bQ[\kappa_1, \kappa_2, ...] \lra H^*(\mathcal{M}_g;\bQ)$$
is an isomorphism in degrees $* \leq \tfrac{2g-2}{3}$. We will typically describe $H^*(\mathcal{I}_g(M);\bQ)$ as an algebra over the polynomial ring $\bQ[\kappa_1, \kappa_2, ...]$.

\subsection{Immersions in $\bR^3$}\label{sec:3}

Let $i : \Sigma_g \looparrowright \bR^3$ be an immersion. Its derivative gives a bundle injection $Di : T\Sigma_g \hookrightarrow \epsilon^3$, and the orientation of $T\Sigma_g$ gives a trivialisation of the complementary bundle: we obtain a canonical isomorphism $T\Sigma_g \oplus \epsilon^1 \cong \epsilon^3$ and so a canonical Spin structure on $T\Sigma_g$. Recall \cite{Johnson} that Spin structures on oriented surfaces are classified by their $\bZ/2$-valued Arf invariant. This construction describes a map
$$\pi_0(\mathcal{I}_g(\bR^3)) \lra \bZ/2$$
sending an immersion to the Arf invariant of its associated Spin structure, and this is a bijection for $g\geq 1$. Let $\mathcal{I}_g(\bR^3)[\epsilon]$ denote the path component which maps to $\epsilon \in \bZ/2$. Then the unit map
$$\bQ \lra H^*(\mathcal{I}_g(\bR^3)[\epsilon];\bQ)$$
is an isomorphism in degrees $5* \leq 2g-6$.

\begin{rem}
In this case it is easy to extract a little torsion information as well: there is a surjection
$$\bZ/2 \oplus \bZ/24 \lra H_1(\mathcal{I}_g(\bR^3)[\epsilon];\bZ)$$
as long as $g \geq 6$, which is an isomorphism after inverting $(g-1)$. It would be interesting to know if this isomorphism is false before localising (for example, when $g=7$).
\end{rem}

\subsection{Immersions in $\bR^4$}

Let $i : \Sigma_g \looparrowright \bR^4$ be an immersion. Its derivative gives a bundle injection $Di : T\Sigma_g \hookrightarrow \epsilon^4$, with complement $V_i$. The orientation of $T\Sigma_g$ induces an orientation of $V_i$, and so there is defined an Euler class $e(V_i) \in H^2(\Sigma_g;\bZ)=\bZ$. This reduces modulo 2 to $w_2(V_i) = w_2(T\Sigma_g) = 0$, so is even. The assignment $i \mapsto \int_{\Sigma_g} e(V_i)$ gives a map
$$\pi_0(\mathcal{I}_g(\bR^4)) \lra 2\bZ$$
which we will show is a bijection for all $g \geq 0$, and we write $\mathcal{I}_g(\bR^4)[n]$ for the component which maps to $2n \in \bZ$. Then the map
$$\bQ[\kappa_1] \lra H^*(\mathcal{I}_g(\bR^4)[n];\bQ)$$
is an isomorphism in degrees $3* \leq 2g-3$ (and all higher $\kappa$ classes are zero).

\subsection{Immersions in $\bR^{2n+1}$, $2n+1 \geq 5$}\label{sec:2np1}

The space $\mathcal{I}_g(\bR^{2n+1})$ is connected for all $g \geq 0$, and the map
$$\bQ[\kappa_1, \kappa_2, ..., \kappa_{2n-2}] \lra H^*(\mathcal{I}_g(\bR^{2n+1});\bQ)$$
is an isomorphism in degrees $3* \leq 2g-3$ (and all higher $\kappa$ classes are zero). In Section \ref{sec:sseq} we use this calculation to study the behaviour of the spectral sequence
\begin{equation}\label{eq:sseqIntro}
E_2^{p,q} = H^p(\Gamma_g; H^q(\mathrm{Imm}(\Sigma_g, \bR^{2n+1});\bQ)) \Longrightarrow H^{p+q}(\mathcal{I}^h_g(\bR^{2n+1});\bQ)
\end{equation}
in the stable range, where we find a curious pattern of differentials.

\subsection{Immersions in $\bR^{2n}$, $2n \geq 6$}\label{sec:2n}

The space $\mathcal{I}_g(\bR^{2n})$ is connected for all $g \geq 0$. Let $\pi : E \to \mathcal{I}_g(\bR^{2n})$ be the universal family of surfaces. It is tautologically equipped with a map $i : E \to \bR^{2n}$ which is an immersion on each fibre. The derivative gives a bundle injection $Di : T_\pi E \hookrightarrow \epsilon^{2n}$ of the vertical tangent bundle, and we write $V \to E$ for its $(2n-2)$-dimensional complement, which has an orientation induced by the orientation of $T_\pi E$. Define a cohomology class
$$\Delta := \pi_!(e(V)) \in H^{2n-4}(\mathcal{I}_g(\bR^{2n});\bQ).$$
Then the map
$$\bQ[\kappa_1, \kappa_2, ..., \kappa_{2n-3}, \Delta] \lra H^*(\mathcal{I}_g(\bR^{2n});\bQ)$$
is an isomorphism in degrees $3* \leq 2g-3$ (and all higher $\kappa$ classes are zero).

\subsection{Outline}

In Section \ref{app:1} we describe the relationship between $\mathcal{I}_g(M)$ and the homotopical version $\mathcal{I}_g^h(M)$, and show that they have isomorphic homology after inverting $(g-1)$. We also show that these spaces have different cohomology when localised at primes dividing $(g-1)$. In Section \ref{sec:model} we relate the space $\mathcal{I}_g^h(M)$ to a moduli space of surfaces with tangential structure, as in \cite{R-WResolution}, which we define for surfaces with boundary. In Section \ref{sec:components}, which is the bulk of the paper, we calculate the set of path components of spaces of immersions of surfaces (possibly with boundary) into a manifold $M$. This allows us to verify the conditions of the homology stability theorem of \cite{R-WResolution}, and we then give the stability range and describe the stable homology. In Section \ref{sec:CalcEuc} we first give the details of the calculations of $\mathcal{I}_g(\bR^n)$ described above, and then in Section \ref{sec:sseq} we give a calculation of the spectral sequence (\ref{eq:sseqIntro}) in the stable range.

\section{The orbifold structure of $\mathcal{I}_g(M)$ and proof of Theorem \ref{thm:Orbifold}}\label{app:1}

The work of Cervera--Mascar{\'o}--Michor \cite{CMM} establishes that the group $\Diff^+(\Sigma_g)$ acts on $\mathrm{Imm}(\Sigma_g, M)$ properly, so in particular with finite stabiliser groups. In fact, they show that stabiliser group $\Diff^+(\Sigma_g)_i$ of an immersion $i$ acts freely and properly discontinuously on $\Sigma_g$, so it is finite and
$$2-2g = \chi(\Sigma_g) =  \# \{\Diff^+(\Sigma_g)_i\} \cdot \chi(\Sigma_g / \Diff^+(\Sigma_g)_i).$$
As $\Sigma_g / \Diff^+(\Sigma_g)_i$ is an orientable surface it has even Euler characteristic, and so for $g \geq 2$ we deduce that
$$\# \{\Diff^+(\Sigma_g)_i\} \mid (g-1).$$
Thus the fibres of $\pi: \mathcal{I}^h_g(M) \to \mathcal{I}_g(M)$ are all classifying spaces of finite groups of order dividing $(g-1)$, and hence are $\bZ[\tfrac{1}{g-1}]$-acyclic. In order for this to imply that $\pi$ is a $\bZ[\tfrac{1}{g-1}]$-cohomology isomorphism we also require that the map $\pi$ be locally well-behaved in a suitable sense.

Such a sense is provided by the construction in \cite{CMM} of slices for the action of $\Diff^+(\Sigma_g)$ on $\mathrm{Imm}(\Sigma_g, M)$. For each $i \in \mathrm{Imm}(\Sigma_g, M)$ let $\mathcal{N}(i)$ denote the normal bundle of the immersion (formed using a metric on $M$ which we fix once and for all). They construct a submanifold $\mathcal{Q}(i) \subset \mathrm{Imm}(\Sigma_g, M)$ diffeomorphic to a convex open neighbourhood of zero in the space $\Gamma(\mathcal{N}(i))$ of smooth sections of the normal bundle of the immersion $i$, enjoying the following properties (cf.\ \cite[\S 2.4]{MichorMumford}):
\begin{enumerate}[(i)]
	\item $\mathcal{Q}(i)$ is invariant under the isotropy group of $i$,
	\item \label{it:ii}$\varphi(\mathcal{Q}(i)) \cap \mathcal{Q}(i) \neq \emptyset$ if and only if $\varphi$ is in the  isotropy group of $i$,
	\item $\Diff^+(\Sigma_g) \cdot \mathcal{Q}(i)$ is an open, invariant neighbourhood of the orbit $\Diff^+(\Sigma_g) \cdot \{i\}$, and retracts onto it.
\end{enumerate}

Using that $\mathcal{Q}(i)$ is homeomorphic to a convex open subset of the vector space $\Gamma(\mathcal{N}(i))$, we see that scaling vectors in $\Gamma(\mathcal{N}(i))$ gives a $\Diff^+(\Sigma_g)_i$-equivariant deformation retraction of $\mathcal{Q}(i)$ onto $\{i\}$, and so by (\ref{it:ii}) a $\Diff^+(\Sigma_g)$-equivariant deformation retraction of $\Diff^+(\Sigma_g) \cdot \mathcal{Q}(i)$ onto $\Diff^+(\Sigma_g) \cdot\{i\}$. Let $U :=[\Diff^+(\Sigma_g) \cdot \mathcal{Q}(i)] \subset \mathcal{I}_g(M)$, a contractible open neighbourhood of $[i]$.


For $n > 0$, consider the presheaf $R^n\pi_*\bZ[\tfrac{1}{g-1}]$ on $\mathcal{I}_g(M)$ given by
$$V \mapsto H^n(\pi^{-1}(V);\bZ[\tfrac{1}{g-1}]).$$
The stalk at $[i]$ of this presheaf is computed as a colimit over neighbourhoods of $[i]$, but as $\pi^{-1}(U)$ deformation retracts onto $\pi^{-1}([i])$, the stalk is the same as the $n$-th cohomology of $\pi^{-1}([i])$. This fibre is $E\Diff^+(\Sigma_g)\times_{\Diff^+(\Sigma_g)} \Diff^+(\Sigma_g)_i \simeq B\Diff^+(\Sigma_g)_i$ the classifying space of a finite group of order dividing $(g-1)$, and hence has trivial $\bZ[\tfrac{1}{g-1}]$-cohomology in positive degrees. Hence all stalks of the presheaf $R^n\pi_*\bZ[\tfrac{1}{g-1}]$ are trivial, and hence the sheafification is trivial. Thus the Leray spectral sequence for the map $\pi$ in $\bZ[\tfrac{1}{g-1}]$-cohomology collapses, which establishes the first part of Theorem \ref{thm:Orbifold}. We remark that this is entirely analogous to the proof of the same result for finite-dimensional orbifolds.

\vspace{2ex}

We will now prove the second part of Theorem A. Let $G =
\bZ/(g-1)$, which has a free action on $\Sigma_g$ with quotient diffeomorphic to $\Sigma_2$ (by viewing $\Sigma_g$ as a torus with $(g-1)$ smaller tori glued on at regular intervals around a meridian, where $G$ acts by rotation).
We then have a map
$$E = EG \times_G \Sigma_g \lra \{*\} \times_G \Sigma_g \cong \Sigma_2$$
which is a submersion on each fibre, so choosing an immersion $\Sigma_2 \looparrowright M$ gives a fibrewise immersion of the surface bundle $E \to BG$ into $M$, which is classified by a map $f : BG \to \mathcal{I}_g^h(M)$. Its composition with $\mathcal{I}_g^h(M) \to
\mathcal{I}_g(M)$ is constant, taking value the equivalence class of the immersion $\Sigma_g \to \Sigma_g/G \cong \Sigma_2 \looparrowright M$.

Let $\pi \subset G$ be a subgroup of order $p$. We claim that the map $B\pi \to BG \to \mathcal{I}_g^h(M)$ is non-trivial on integral cohomology in positive degrees: as the cohomology of $\pi$ is $p$-local in positive degrees, it follows that there
are classes in $H^*(\mathcal{I}_g^h(M);\bZ_{(p)})$ which do not come from $H^*(\mathcal{I}_g(M);\bZ_{(p)})$. To establish this claim we consider the
composition
$$B\pi \lra BG \overset{f}\lra \mathcal{I}_g^h(M) \lra *\hcoker \Diff^+(\Sigma_g) = B\Diff^+(\Sigma_g) \lra B\Gamma_g$$
where $\Gamma_g = \pi_0(\Diff^+(\Sigma_g))$ is the \emph{mapping class
group} of $\Sigma_g$. On fundamental groups the homomorphism $G \to
\Gamma_g$ is injective (which may be seen, for example, by considering
the action of $G$ on the first homology of $\Sigma_g$). It is well known that
$\Gamma_g$ has a torsion-free normal subgroup of finite index, say
$\Delta$, so $\pi \to G \to \Gamma_g \to \Gamma_g / \Delta$ is also injective.
It follows from a theorem of Swan \cite[Theorem 1]{Swan} that this composition is non-trivial
on integral cohomology in infinitely many positive degrees, which proves the claim.

\section{Other homotopical models and surfaces with boundary}\label{sec:model}

Let us write $\Bun(T\Sigma_g, \ell)$ for the set of bundle maps $T\Sigma_g \to \ell$, i.e.\ those continuous maps which are linear isomorphisms on each fibre, and equip it with the compact-open topology. We will construct a slightly different homotopical model to $\mathcal{I}_g^h(M)$. Define
$$\mathcal{M}^{\theta_M}(\Sigma_g) := \mathrm{Bun}(T\Sigma_g, \ell) \hcoker \Diff^+(\Sigma_g)$$
to be the homotopy quotient, or Borel construction.

\begin{lem}
There is a (na\"{i}ve) $\Diff^+(\Sigma_g)$-equivariant weak homotopy equivalence
$$\mathrm{Imm}(\Sigma_g, \mathring{M}) \simeq \mathrm{Bun}(T\Sigma_g, \ell).$$
\end{lem}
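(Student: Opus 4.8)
The plan is to apply the Hirsch--Smale immersion theorem to identify $\mathrm{Imm}(\Sigma_g, \mathring{M})$ with a space of bundle-theoretic data, and then to identify that data with $\mathrm{Bun}(T\Sigma_g, \ell)$, all $\Diff^+(\Sigma_g)$-equivariantly. Recall that the Hirsch--Smale theorem asserts that (since $\Sigma_g$ is an open manifold if it has boundary, and more generally since $\dim \Sigma_g < \dim M$) the derivative map
$$D : \mathrm{Imm}(\Sigma_g, \mathring{M}) \lra \mathrm{Mono}(T\Sigma_g, T\mathring{M})$$
sending an immersion $i$ to its differential $Di$ is a weak homotopy equivalence, where $\mathrm{Mono}(T\Sigma_g, T\mathring{M})$ denotes the space of fibrewise-injective bundle maps $T\Sigma_g \to T\mathring{M}$ covering some map $\Sigma_g \to \mathring{M}$ (with compact-open topology). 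This map is manifestly $\Diff^+(\Sigma_g)$-equivariant, where $\varphi$ acts on an immersion by $i \mapsto i \circ \varphi$ and on a bundle monomorphism $F : T\Sigma_g \to T\mathring{M}$ by $F \mapsto F \circ D\varphi$.

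Next I would identify $\mathrm{Mono}(T\Sigma_g, T\mathring{M})$ with $\mathrm{Bun}(T\Sigma_g, \ell)$, again equivariantly. Given a fibrewise-injective bundle map $F : T\Sigma_g \to T\mathring{M}$ covering $f : \Sigma_g \to \mathring{M}$, the image $F((T\Sigma_g)_x) \subset T_{f(x)}\mathring{M}$ is a $2$-dimensional subspace, oriented via the orientation of $T\Sigma_g$ and the isomorphism $F$; this produces a point of $\mathrm{Gr}_2^+(T\mathring{M}) = \mathrm{Gr}_2^+(TM)$ over $f(x)$, and by construction $F$ factors as a fibrewise isomorphism $T\Sigma_g \to \ell$ covering the resulting map $\Sigma_g \to \mathrm{Gr}_2^+(TM)$. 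Conversely, a bundle map $T\Sigma_g \to \ell$ covering $g : \Sigma_g \to \mathrm{Gr}_2^+(TM)$ may be post-composed with the tautological inclusion $\ell \hookrightarrow \theta_M^*TM$ followed by the differential of the projection $\mathrm{Gr}_2^+(TM) \to M$ to yield an element of $\mathrm{Mono}(T\Sigma_g, T\mathring{M})$. These two assignments are mutually inverse, continuous, and compatible with the $\Diff^+(\Sigma_g)$-action (which acts by precomposition with $D\varphi$ on both sides), so they assemble into a $\Diff^+(\Sigma_g)$-equivariant homeomorphism $\mathrm{Mono}(T\Sigma_g, T\mathring{M}) \cong \mathrm{Bun}(T\Sigma_g, \ell)$. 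Composing with $D$ gives the desired equivalence.

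The main obstacle is making the Hirsch--Smale step genuinely $\Diff^+(\Sigma_g)$-equivariant rather than merely a weak equivalence at the level of homotopy types: one wants to know that $D$ is an equivariant weak equivalence in the naïve sense (i.e.\ a weak equivalence on the underlying spaces, which is all that is needed to pass to equivalent Borel constructions), and for this it suffices that $D$ is already a weak equivalence of the plain spaces, since it is strictly equivariant. So the real content is just invoking the Hirsch--Smale theorem in its parametrised form. One must also take a small amount of care with the boundary: when $\Sigma_g$ has nonempty boundary the relevant immersions are those which are suitably collared/standard near $\partial \Sigma_g$, and the Hirsch--Smale theorem applies in the relative form; I would remark that the identification of the bundle data with $\mathrm{Bun}(T\Sigma_g, \ell)$ goes through verbatim in the relative setting, with the boundary condition translating into a fixed behaviour of the bundle map near $\partial\Sigma_g$.
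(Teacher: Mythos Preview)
Your proposal is correct and follows essentially the same two-step argument as the paper: first apply Hirsch--Smale to get a $\Diff^+(\Sigma_g)$-equivariant weak equivalence $\mathrm{Imm}(\Sigma_g,\mathring{M}) \to \mathrm{Bun}_{inj}(T\Sigma_g,TM)$ via the derivative, then write down the evident equivariant homeomorphism $\mathrm{Bun}_{inj}(T\Sigma_g,TM) \cong \mathrm{Bun}(T\Sigma_g,\ell)$. Your additional remarks on why a strictly equivariant weak equivalence suffices for the na\"{i}ve notion, and on the boundary case, are accurate but go slightly beyond what the lemma (for closed $\Sigma_g$) requires.
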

Here, by a \emph{na\"{i}ve equivariant weak homotopy equivalence} we mean that the two $\Diff^+(\Sigma_g)$-spaces are connected by a zig-zag of equivariant maps which are (non-equivariant) weak homotopy equivalences. This relation is too coarse for many applications of equivariant homotopy theory, but is sufficient to guarantee that the homotopy quotients are weakly equivalent.

\begin{proof}
We proceed in two steps. The first step is to note that Hirsch--Smale immersion theory implies that the $\Diff^+(\Sigma_g)$-equivariant ``derivative map"
$$\mathrm{Imm}(\Sigma_g, \mathring{M}) \lra \mathrm{Bun}_{inj}(T\Sigma_g, TM)$$
to the space of injective bundle maps is a weak homotopy equivalence.

The second step is to note that there is a $\Diff^+(\Sigma_g)$-equivariant map 
$$\mathrm{Bun}_{inj}(T\Sigma_g, TM) \lra \mathrm{Bun}(T\Sigma_g, \ell)$$
sending the bundle injection $e : T\Sigma_g \hookrightarrow TM$ to the bundle map
\begin{eqnarray*}
e' : T\Sigma_g &\lra& \ell\\
(x, v) & \mapsto & ((e(x), e(T_x\Sigma_g) \subset T_{e(x)}M), e(v)).
\end{eqnarray*}
This is easily seen to be a homeomorphism, and the claim follows.
\end{proof}

By taking homotopy quotients, this lemma shows that there is a weak homotopy equivalence $\mathcal{I}^h_g(M) \simeq \mathcal{M}^{\theta_M}(\Sigma_g)$. Along with Theorem \ref{thm:Orbifold} this implies the zig-zag
$$\mathcal{I}_g(M) \overset{\simeq_{H_*}}\longleftarrow \mathcal{I}_g^h(M) \overset{\simeq}\lra \mathcal{M}^{\theta_M}(\Sigma_g)$$
where the leftwards map is a $\bZ[\tfrac{1}{g-1}]$-homology equivalence and the rightwards map is a weak homotopy equivalence.

\subsection{Surfaces with boundary}

The definition of the spaces $\mathcal{M}^{\theta_M}(\Sigma_g)$ extends easily to the case of surfaces with boundary. Let us write $\Sigma_{g,b}$ for a surface of genus $g$ with $b$ boundary components. We fix a bundle map $\delta : T\Sigma_{g,b}\vert_{\partial \Sigma_{g, b}} \to \ell$ and let $\Bun_\partial(T\Sigma_{g, b}, \ell;\delta)$ be the space of those bundle maps which restrict to $\delta$ on the boundary. The group $\Diff_\partial^+(\Sigma_{g,b})$ of diffeomorphisms which are the identity near $\partial \Sigma_{g,b}$ acts on this space, and we define
$$\mathcal{M}^{\theta_M}(\Sigma_{g, b};\delta) := \Bun_\partial(T\Sigma_{g, b}, \ell;\delta) \hcoker \Diff^+_\partial(\Sigma_{g,b}).$$

The main results of \cite{R-WResolution} reduce the problem of establishing homology stability for the moduli spaces $\mathcal{M}^\theta(\Sigma_{g, b};\delta)$ to the problem of understanding the sets $\pi_0(\mathcal{M}^\theta(\Sigma_{g,b};\delta))$ and the gluing maps between them sufficiently well. 

\subsection{Spaces of bundle maps as spaces of lifts}
In order to understand these sets of path components, we require a further model for the space $\Bun_\partial(T\Sigma_{g, b}, \ell;\delta)$.  Let us pick a map $\tau : \Sigma_{g,b} \to BSO(2)$ classifying the tangent bundle (i.e. we have a given bundle isomorphism $\varphi: \tau^* \gamma_2^+ \cong T\Sigma_{g,b}$). 

\begin{defn}
For a fibration $\theta : X \to BSO(2)$ let $\mathrm{Lifts}(\tau, \theta)$ denote the space of maps $l : \Sigma_{g, b} \to X$ such that $\tau = \theta \circ l$. 
If a lift $b : \partial \Sigma_{g,b} \to X$ of $\tau\vert_{\partial \Sigma_{g,b}}$ is already given, let $\mathrm{Lifts}(\tau, \theta;b)$ denote the subspace of those $l$ which restrict to $b$ on the boundary.
\end{defn}

If $\theta$ is any map, we define $\mathrm{Lifts}(\tau, \theta;b)$ to be $\mathrm{Lifts}(\tau, \theta^f;b^f)$ where $\theta^f : X^f \to BSO(2)$ is the canonical replacement of $\theta$ by a fibration, and $b^f$ is the composition of $b$ with the canonical map $X \to X^f$. If $\theta$ is already a fibration then the spaces $\mathrm{Lifts}(\tau, \theta;\delta)$ and $\mathrm{Lifts}(\tau, \theta^f;b^f)$ are homotopy equivalent.

Suppose $\theta$ is a fibration (by replacing it if necessary). Given a lift $l$ of $\theta$, we have a bundle map
$$\mathcal{B}(l) : T\Sigma_{g, b} \overset{\varphi}\cong \tau^*\gamma_2^+ = (\theta \circ l)^*\gamma_2^+ \cong l^*(\theta^* \gamma_2^+) \lra \theta^*\gamma_2^+,$$
and this construction defines a map
$$\mathrm{Lifts}(\tau, \theta;b) \lra \Bun_\partial(T\Sigma_{g, b}, \theta^*\gamma_2^+;\mathcal{B}(b)),$$
which is a weak homotopy equivalence.

\subsection{Gluing}

Suppose we are given a surface $\Sigma$, a collar $c : [0,1) \times \partial \Sigma \to \Sigma$ and a boundary condition $\delta : T\Sigma\vert_{\partial \Sigma} \to \ell$, and similar data $(\Sigma', c', \delta')$. Suppose we have embeddings
$$\partial \Sigma \overset{i}\lla \partial_0 \overset{i'}\lra \partial \Sigma'$$
such that
$$\epsilon^1 \oplus T\partial_0 \overset{\epsilon^1 \oplus Di}\lra \epsilon^1 \oplus T\partial\Sigma = T([0,1) \times \partial \Sigma)\vert_{\{0\} \times \partial \Sigma} \overset{Dc}\lra T\Sigma\vert_{\partial \Sigma} \overset{\delta}\lra \ell$$
and the analogous map for $(\Sigma', c', \delta', i')$ are equal. Then there is a gluing map
$$\Bun_\partial(T\Sigma, \ell;\delta) \times \Bun_\partial(T\Sigma', \ell;\delta') \lra \Bun_\partial(T(\Sigma \cup_{\partial_0} \Sigma');\delta \cup \delta')$$
and an associated gluing map
$$\mathcal{M}^{\theta_M}(\Sigma;\delta) \times \mathcal{M}^{\theta_M}(\Sigma';\delta') \lra \mathcal{M}^{\theta_M}(\Sigma \cup_{\partial_0} \Sigma';\delta \cup \delta').$$

%

\section{Isotopy classes of immersions and homology stability}\label{sec:components}

Fix an immersion $\delta : [0,1) \times \partial \Sigma_{g,b} \looparrowright M$ and let us write $\delta$ for the associated bundle map $T\Sigma_{g,b}\vert_{\partial \Sigma_{g,b}} \to \ell$ as well as for the underlying map $\Sigma_{g, b} \to \mathrm{Gr}_2^+(TM)$; the precise meaning will be clear from the context. We first aim to compute
$$\pi_0(\Bun_\partial(T\Sigma_{g, b}, \ell;\delta)) = \pi_0(\mathrm{Lifts}(\tau, \theta_M;\delta))$$
as then $\pi_0(\mathcal{M}^{\theta_M}(\Sigma_{g, b};\delta))$ will be the quotient of this set by the evident action of the \emph{mapping class group}
$$\pi_0(\Diff_\partial^+(\Sigma_{g,b})) =: \Gamma_{g,b}.$$

From now on we make the assumption that $M$ is simply-connected and of dimension at least three, which implies that all boundary conditions $\delta$ are regularly homotopic, and we can hence take them to be in some standard position. It also implies that the map
$$\pi_0(\map_\partial(\Sigma_{g,b}, M;{\delta})) \lra H_2(M;\bZ),$$
which closes up the boundaries (in a standard position) with standard discs and takes the fundamental class, is a bijection.

\begin{prop}\label{prop:5mfld}
Let $M$ be simply-connected and of dimension at least 5. Then the natural map 
$$\pi_0(\Bun_\partial(T\Sigma_{g,b}, \ell;\delta)) \lra \pi_0(\map_\partial(\Sigma_{g,b}, M;{\pi_M \circ \delta})) \cong H_2(M;\bZ)$$
induces a bijection. The action of the mapping class group is trivial.
\end{prop}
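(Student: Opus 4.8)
The plan is to analyze the bundle map space $\Bun_\partial(T\Sigma_{g,b}, \ell;\delta)$ via its underlying-map-to-$\mathrm{Gr}_2^+(TM)$ fibration and to show that in the metastable range afforded by $\dim M \geq 5$ the ``derivative'' data is homotopically inessential. Concretely, write $\pi_M : \mathrm{Gr}_2^+(TM) \to M$ for the bundle projection; its fibre is $\mathrm{Gr}_2^+(\bR^{\dim M})$, which is $1$-connected once $\dim M \geq 4$. Since $M$ is simply-connected, $\mathrm{Gr}_2^+(TM)$ is simply-connected as well, and $\pi_1(\mathrm{Gr}_2^+(\bR^{\dim M})) = 0$ for $\dim M \geq 4$; thus $\mathrm{map}_\partial(\Sigma_{g,b}, \mathrm{Gr}_2^+(TM); \delta) \to \mathrm{map}_\partial(\Sigma_{g,b}, M; \pi_M\circ\delta)$ is, on $\pi_0$, a bijection (the fibre of this map is a section space of a bundle with $1$-connected fibre over a $2$-complex, so it is connected), and the target is identified with $H_2(M;\bZ)$ by the hypothesis recorded just before the proposition.

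First I would set up the commuting square relating $\pi_0 \Bun_\partial(T\Sigma_{g,b}, \ell; \delta)$, $\pi_0\mathrm{Lifts}(\tau, \theta_M; \delta)$, $\pi_0\mathrm{map}_\partial(\Sigma_{g,b}, \mathrm{Gr}_2^+(TM); \delta)$, and $\pi_0\mathrm{map}_\partial(\Sigma_{g,b}, M; \pi_M\circ\delta)$, using the weak equivalence $\Bun_\partial(T\Sigma_{g,b},\ell;\delta)\simeq \mathrm{Lifts}(\tau,\theta_M;\delta)$ from Section \ref{sec:model} and the forgetful maps. It then suffices to prove: (a) $\mathrm{Lifts}(\tau,\theta_M;\delta) \to \mathrm{map}_\partial(\Sigma_{g,b},\mathrm{Gr}_2^+(TM);\delta)$ is a $\pi_0$-bijection, and (b) the latter maps $\pi_0$-bijectively onto $\mathrm{map}_\partial(\Sigma_{g,b},M;\pi_M\circ\delta)$, which is (b) the recorded fact above. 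For (a): replacing $\theta_M$ by a fibration, a lift of a fixed map $u : \Sigma_{g,b}\to\mathrm{Gr}_2^+(TM)$ along $\theta_M : \mathrm{Gr}_2^+(TM)^f \to BSO(2)$ is a section of the pullback fibration $u^*\theta_M$ over $\Sigma_{g,b}$; its fibre is the homotopy fibre of $\theta_M$, i.e.\ the fibre $F$ of $\mathrm{Gr}_2^+(TM)\to BSO(2)$. I would compute $F$: the total space $\mathrm{Gr}_2^+(TM)$ fibres over $M$ with fibre $\mathrm{Gr}_2^+(\bR^d)$ ($d = \dim M$), and $\mathrm{Gr}_2^+(\bR^d)\to BSO(2)$ has fibre the Stiefel-type space of $2$-frames' complements; for $d\geq 5$ this fibre is $2$-connected (it is $(d-4)$-connected, being $V_2(\bR^d)$-ish — more precisely the fibre is $\mathrm{Gr}_{d-2}(\bR^{d-2})$-bundle... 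I would identify it as the space of $(d-2)$-planes, which deformation retracts appropriately and is $(d-4)$-connected). Hence $F$ is $2$-connected when $d\geq 5$ (using that $M$ is $1$-connected to handle the base), so the section space over the $2$-complex $\Sigma_{g,b}$ relative to the boundary is connected, giving (a). Finally I would note the boundary condition $\delta$ is handled uniformly because all boundary conditions are regularly homotopic (stated before the proposition), so the choice does not affect $\pi_0$.

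For the mapping class group triviality: the action of $\Gamma_{g,b}$ on $\pi_0\Bun_\partial(T\Sigma_{g,b},\ell;\delta) \cong H_2(M;\bZ)$ factors through the induced action on $\pi_0\mathrm{map}_\partial(\Sigma_{g,b},M;\pi_M\circ\delta)\cong H_2(M;\bZ)$, and a diffeomorphism of $\Sigma_{g,b}$ fixing the boundary acts on $H_2(M;\bZ)$ by precomposition $f\mapsto f\circ\varphi$; but $\varphi$ fixes $[\Sigma_{g,b},\partial]$ up to the $\mathbb Z$-valued degree, and since any orientation-preserving $\varphi\in\Diff^+_\partial$ has degree $1$, the pushed-forward fundamental class is unchanged. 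So the action is trivial. The main obstacle I anticipate is the precise connectivity computation of the fibre $F$ of $\theta_M$ (equivalently of $\mathrm{Gr}_2^+(\bR^d)\to BSO(2)$) and checking it is genuinely $2$-connected for $d\geq 5$ — and more subtly, that $2$-connectivity of the fibre really is enough given that $\Sigma_{g,b}$ is $2$-dimensional and we work relative to the boundary (obstruction theory: obstructions to a section lie in $H^{i+1}(\Sigma_{g,b},\partial;\pi_i F)$, which vanishes for $i\leq 2$ when $F$ is $2$-connected, and $H^j$ vanishes for $j\geq 3$ by dimension — so in fact $1$-connectivity of $F$ already suffices for the $\pi_0$ statement, which is reassuring). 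Everything else is formal bookkeeping with fibration sequences and the results already imported from Section \ref{sec:model}.
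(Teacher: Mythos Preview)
Your decomposition into steps (a) and (b) does not work: neither map is a $\pi_0$-bijection on its own. For (b), the fibre of $\pi_M : \mathrm{Gr}_2^+(TM)\to M$ is $\mathrm{Gr}_2^+(\bR^d)$, which is only $1$-connected; its $\pi_2$ is $\bZ$ for $d\geq 5$ (from the fibration $SO(2)\to V_2(\bR^d)\to \mathrm{Gr}_2^+(\bR^d)$). Over a $2$-complex rel boundary the obstruction to a homotopy of sections lies in $H^2(\Sigma_{g,b},\partial;\pi_2(\text{fibre}))\cong\bZ$, so the section space is \emph{not} connected and $\map_\partial(\Sigma_{g,b},\mathrm{Gr}_2^+(TM);\delta)\to\map_\partial(\Sigma_{g,b},M;\pi_M\circ\delta)$ is not injective on $\pi_0$. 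Your assertion that a $1$-connected fibre suffices is the error; you need $2$-connectivity. For (a), the space $\mathrm{Lifts}(\tau,\theta_M;\delta)$ is the fibre over $\tau$ of $\map_\partial(\Sigma_{g,b},\mathrm{Gr}_2^+(TM);\delta)\to\map_\partial(\Sigma_{g,b},BSO(2);\theta_M\circ\delta)$, and the base has $\pi_0\cong H^2(\Sigma_{g,b},\partial;\bZ)\cong\bZ$; hence the fibre inclusion cannot be $\pi_0$-surjective. (Your description of the fibre over a fixed $u$ as ``sections of $u^*\theta_M$'' has source and target of $\theta_M$ reversed, and the claim that the homotopy fibre of $\theta_M$ is $2$-connected would require $M$ to be $2$-connected, since that fibre is $\mathrm{Fr}_2^+(TM)$ with $\pi_2\cong\pi_2(M)$.)

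The two errors do cancel---the extra $\bZ$ in (b) is exactly the Euler-number $\bZ$ in $\pi_0\map_\partial(\Sigma_{g,b},BSO(2))$ from (a)---but the way to make this precise is the paper's factorisation: one uses the map $\theta_M\times\pi_M:\mathrm{Gr}_2^+(TM)\to BSO(2)\times M$, whose homotopy fibre is the Stiefel manifold $\mathrm{Fr}_2(\bR^d)$, genuinely $2$-connected for $d\geq 5$. Then $\mathrm{Lifts}(\tau,\theta_M;\delta)\to\mathrm{Lifts}(\tau,\pi_1;(\theta_M\times\pi_M)\circ\delta)\simeq\map_\partial(\Sigma_{g,b},M;\pi_M\circ\delta)$ is $1$-connected, hence a $\pi_0$-bijection, in one step. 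Your argument for triviality of the mapping class group action is fine.
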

\begin{proof}
Consider the diagram
\begin{diagram}[LaTeXeqno]\label{dig:1}
S^{d-2}\\
\dTo\\
\mathrm{Fr}_2(\bR^d) & \rTo & \mathrm{Gr}_2^+(TM) & \rTo^{\theta_M \times \pi_M}& BSO(2) \times M\\
\dTo^{\mathrm{ev}_1} & & \dTo^{\theta_M} & & \dTo^{\pi_1}\\
S^{d-1} & & BSO(2) & \rEq & BSO(2)
\end{diagram}
where the first column and middle row are homotopy fibre sequences. By the first column, if $d \geq 5$ then $\mathrm{Fr}_2(\bR^d)$ is 2-connected, and hence the map $\theta_M \times \pi_M$ is 3-connected. Fixing a map $\tau : (\Sigma_{g,b}, \partial \Sigma_{g,b}) \to (BSO(2),*)$ classifying the tangent bundle, the map
$$\mathrm{Lifts}(\tau, \theta_M;\delta) \lra \mathrm{Lifts}(\tau, \pi_1; (\theta_M \times \pi_M)\circ \delta) \simeq \map_\partial(\Sigma_{g,b}, M;\pi_M \circ \delta)$$
is then 1-connected and in particular a bijection on $\pi_0$.
\end{proof}

The cases of 3- and 4-dimensional background manifolds are rather more complicated. In these cases an immersion of a surface into such a manifold endows the surface with additional geometric structure which cannot be recovered from the homotopy class of the map alone. In dimension 3 this is a Spin structure, and in dimension 4 it is a choice of oriented rank 2 vector bundle with Euler class satisfying a certain congruence condition.

\begin{prop}\label{prop:dim3}
Let $M$ be a simply-connected 3-manifold; such a manifold admits a Spin structure. A choice of Spin structure on $M$ gives a bijection
$$\pi_0(\Bun_\partial(T\Sigma_{g,b}, \ell;\delta)) \cong \mathrm{Spin}(\Sigma_{g,b};*) \times H_2(M;\bZ),$$
where $\mathrm{Spin}(\Sigma_{g,b};*)$ denotes the set of isomorphism classes of Spin structures on $\Sigma_{g,b}$, with the trivial Spin structure around the boundary. The action of the mapping class group is given by its usual action on the set of Spin structures.

\end{prop}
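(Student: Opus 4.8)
The plan is to write the bijection down explicitly and then verify it is a bijection by a fibration argument over $\map_\partial(\Sigma_{g,b}, M;\pi_M\circ\delta)$, whose set of path components is already identified with $H_2(M;\bZ)$; the homotopy fibre of that fibration is governed by the case $M=\bR^3$, where an immersed surface records precisely a Spin structure. (This is the situation where, unlike in Proposition~\ref{prop:5mfld}, the fibre $\mathrm{Fr}_2(\bR^3)\simeq SO(3)$ of $\theta_M\times\pi_M$ is merely connected and not simply-connected, which is the source of the $\mathrm{Spin}(\Sigma_{g,b};*)$ factor.) Since $M$ is simply-connected it is orientable, hence parallelisable as a $3$-manifold, so it admits a Spin structure; fix one, $\mathfrak{s}_M$. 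Given a bundle map $e\colon T\Sigma_{g,b}\to\ell$ with underlying map $f\colon\Sigma_{g,b}\to M$, composing $e$ with the tautological inclusion $\ell\hookrightarrow\pi_M^*TM$ gives a fibrewise-injective bundle map $T\Sigma_{g,b}\hookrightarrow f^*TM$; its complementary line bundle is oriented by the orientations of $T\Sigma_{g,b}$ and of $M$, hence trivial, and a trivialisation of it — unique up to contractible choice — yields an isomorphism $T\Sigma_{g,b}\oplus\epsilon^1\cong f^*TM$. Pulling $\mathfrak{s}_M$ back along this isomorphism, and using that a Spin structure on $E\oplus\epsilon^1$ is the same datum as one on $E$, we obtain a Spin structure $\mathfrak{s}(e)\in\mathrm{Spin}(\Sigma_{g,b};*)$, standard near $\partial\Sigma_{g,b}$ because $\delta$ is. This runs in families and $\mathrm{Spin}(\Sigma_{g,b};*)$ is discrete, so $\mathfrak{s}$ is locally constant; together with $e\mapsto[f]\in H_2(M;\bZ)$ it defines a map
$$\pi_0(\Bun_\partial(T\Sigma_{g,b},\ell;\delta))\lra\mathrm{Spin}(\Sigma_{g,b};*)\times H_2(M;\bZ),$$
which I claim is the asserted bijection.

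The heart of the matter is the computation of the fibre, i.e.\ the case $M=\bR^3$. Here $\mathrm{Gr}_2^+(\bR^3)\cong S^2$ by recording an oriented plane through its positive unit normal, and under this identification the tautological bundle $\ell_0$ is canonically $TS^2\subset\epsilon^3$, so $\theta_0\colon S^2\to BSO(2)$ classifies a bundle with $\ell_0\oplus\epsilon^1\cong\epsilon^3$. Exactly as for the map $e\mapsto e'$ in Section~\ref{sec:model}, the space $\Bun_\partial(T\Sigma_{g,b},\ell_0;\mathrm{std})$ is homeomorphic to the space of fibrewise-injective bundle maps $T\Sigma_{g,b}\hookrightarrow\epsilon^3$ which are standard near the boundary; adjoining to such a map the positive unit normal of its image defines a map to the space of positively-oriented trivialisations of $T\Sigma_{g,b}\oplus\epsilon^1$ (standard near the boundary), which is a homotopy equivalence since on fibres it is a section of the forgetful map from positive $3$-frames of $\bR^3$ to $2$-frames of $\bR^3$, whose fibres — ways of completing a $2$-frame to a positive $3$-frame — are open half-spaces, hence contractible. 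That space of trivialisations is non-empty (as $w_2(T\Sigma_{g,b}\oplus\epsilon^1)=w_2(T\Sigma_{g,b})=0$) and its set of components is a torsor over $[\Sigma_{g,b}/\partial\Sigma_{g,b},SO(3)]\cong H^1(\Sigma_{g,b},\partial\Sigma_{g,b};\bZ/2)$, the isomorphism coming from the $3$-connected map $SO(3)=\bR P^3\to\bR P^\infty$ together with $\dim\Sigma_{g,b}=2$. Passing to the induced Spin structure is equivariant for this torsor structure and the standard $H^1(\Sigma_{g,b},\partial\Sigma_{g,b};\bZ/2)$-torsor structure on $\mathrm{Spin}(\Sigma_{g,b};*)$, hence a bijection; thus $\mathfrak{s}$ identifies $\pi_0(\Bun_\partial(T\Sigma_{g,b},\ell_0;\mathrm{std}))$ with $\mathrm{Spin}(\Sigma_{g,b};*)$, and, everything being natural in $\Sigma_{g,b}$, this is equivariant for the standard mapping class group action on Spin structures.

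For general $M$, fix $f\colon\Sigma_{g,b}\to M$. The classifying map of $f^*TM$ lifts, via $\mathfrak{s}_M$, to $B\Spin(3)$, which is $3$-connected (as $\Spin(3)$ is $2$-connected); since $\Sigma_{g,b}$ is $2$-dimensional this lift is null-homotopic with connected space of null-homotopies, yielding an isomorphism $f^*TM\cong\epsilon^3$ that is Spin-compatible and canonical up to homotopy. Under it $f^*\mathrm{Gr}_2^+(TM)\cong\Sigma_{g,b}\times S^2$ and $\theta_M$ becomes $\theta_0\circ\mathrm{pr}_{S^2}$, so the fibre of $\Bun_\partial(T\Sigma_{g,b},\ell;\delta)\to\map_\partial(\Sigma_{g,b},M;\pi_M\circ\delta)$ over $f$ is identified, up to homotopy, with $\Bun_\partial(T\Sigma_{g,b},\ell_0;\mathrm{std})$, on which $\mathfrak{s}$ restricts to the invariant of the previous paragraph. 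The map to $H_2(M;\bZ)$ is surjective because every homotopy class of map $\Sigma_{g,b}\to M$ admits a lift (for closed surfaces take, in the above trivialisation, a map $\Sigma_g\to S^2$ of degree $1-g$, using $e(\ell_0)=2$; for surfaces with boundary $T\Sigma_{g,b}$ is trivial), and the $\pi_1$ of a component of the base acts trivially on the $\pi_0$ of the fibre, since $\mathfrak{s}$ — being defined on the whole total space — is constant along each orbit of that action yet injective on the components of the fibre. Hence the fibre of $\pi_0(\Bun_\partial(T\Sigma_{g,b},\ell;\delta))\to H_2(M;\bZ)$ over any class is, via $\mathfrak{s}$, exactly $\mathrm{Spin}(\Sigma_{g,b};*)$, which gives the claimed bijection. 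Finally the mapping class group fixes the fundamental class of $\Sigma_{g,b}$, hence acts trivially on the $H_2(M;\bZ)$ factor, and the whole construction is natural for diffeomorphisms of $\Sigma_{g,b}$, so the bijection carries the $\Gamma_{g,b}$-action into the product of the trivial action on $H_2(M;\bZ)$ with the usual action on $\mathrm{Spin}(\Sigma_{g,b};*)$.

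The main obstacle is the local computation of the second paragraph — that formal immersion data of $\Sigma_{g,b}$ in $\bR^3$, up to homotopy, is precisely a Spin structure, with matching torsor structures — together with the verification that $f^*TM$ admits a Spin-compatible trivialisation canonical enough that the fibre over $f$ really is $\Bun_\partial(T\Sigma_{g,b},\ell_0;\mathrm{std})$ with $\mathfrak{s}$ restricting correctly. The remaining points (surjectivity onto $H_2(M;\bZ)$, triviality of the monodromy, boundary conventions, naturality) are routine.
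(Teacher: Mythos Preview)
Your proof is correct and follows the same strategy as the paper: analyse the fibration $\Bun_\partial(T\Sigma_{g,b},\ell;\delta)\to\map_\partial(\Sigma_{g,b},M)$, identify the fibre over each $f$ with $\mathrm{Spin}(\Sigma_{g,b};*)$, and check that the monodromy is trivial so that the short exact sequence of $\pi_0$'s splits. The execution differs only in presentation: the paper works universally, obtaining the Spin structure from the canonical factorisation of $\mathrm{Gr}_2^+(\gamma_3^{\Spin})\to BSO(2)$ through $B\Spin(2)$ and invoking that $\mathrm{Fr}_2(\bR^3)\to\bR\bP^\infty$ is $3$-connected, whereas you trivialise $f^*TM$ (using that $B\Spin(3)$ is $3$-connected) to reduce explicitly to $M=\bR^3$ and then run a direct torsor argument. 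Your argument for triviality of the monodromy---that the globally defined invariant $\mathfrak{s}$ is constant on path components of the total space yet injective on fibres---is a little slicker than the paper's.
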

\begin{proof}
An orientable 3-manifold admits a Spin structure as there is a relation $w_2 = w_1^2$ among its Stiefel--Whitney classes by Wu's formula. We choose one, $\mathfrak{s}$, once and for all. 

From diagram (\ref{dig:1}) in the proof of Proposition \ref{prop:5mfld}, we see there are homotopy cartesian squares
\begin{diagram}
& & \mathrm{Lifts}(\tau, \theta_M; \delta) & \lTo & \mathrm{Lifts}(\tau \times f, \theta_M \times \pi_M; \delta)\\
& & \dTo & & \dTo\\
\map_\partial(\Sigma_{g,b}, M;\pi_M \circ \delta) & \,\,\simeq\,\, &  \mathrm{Lifts}(\tau, \pi_1; \pi_M \circ \delta) & \lTo & \{f : \Sigma_{g, b} \to M\}
\end{diagram}
for each point $\{f\} \in \mathrm{Lifts}(\tau, \pi_1; \pi_M \circ \delta)$. 

The space $\mathrm{Lifts}(\tau \times f, \theta_M \times \pi_M; \delta)$ is always non-empty. Finding a point in it is the same as finding a bundle injection $T\Sigma_{g,b} \hookrightarrow f^*TM$ extending $\delta$. The bundle $f^*TM$ is always trivial (it is a Spin vector bundle on a 2-manifold), and choosing a trivialisation $\eta : f^*TM \to \epsilon^3$, we see that finding such a bundle injection is the same as finding a lift in a diagram
\begin{diagram}
\partial \Sigma_{g,b} & \rTo^{\eta \circ \delta} & \mathrm{Gr}_2^+(\bR^3) & \,\,\simeq S^2\\
\dInto & & \dTo\\
\Sigma_{g,b} & \rTo^\tau & BSO(2).
\end{diagram}
By obstruction theory there is a unique obstruction $w_2(\tau) \in H^2(\Sigma_{g,b}, \partial \Sigma_{g,b};\bZ/2)$ to the existence of such a lift, but this is of course zero, as all orientable surfaces are Spin. Hence there is a surjection
$$\pi_0(\Bun_\partial(T\Sigma_{g,b}, \ell;\delta)) \lra \pi_0(\map_\partial(\Sigma_{g,b}, M;\pi_M \circ \delta)) \cong H_2(M;\bZ)$$
and the preimage of $[f]$ is the quotient of $\pi_0(\mathrm{Lifts}(\tau \times f, \theta_M \times \pi_M;\delta))$ by the action of $\pi_1(\map_\partial(\Sigma_{g,b}, M; \pi_M \circ \delta), \{f\})$.

In the fibration
$$\mathrm{Fr}_2(\bR^3) \lra \mathrm{Gr}_2^+(\gamma_3^{\Spin}) \overset{\pi_1 \times \pi_2}\lra BSO(2) \times B\Spin(3),$$
the space $\mathrm{Gr}_2^+(\gamma_3^{\Spin})$ has a sequence of bundles 
$$0 \lra \pi_1^*\gamma_2^{+} \lra \pi_2^*\gamma_3^{\Spin} \lra \epsilon^1 \lra 0$$
which splits to give an isomorphism $\pi_1^*\gamma_2^+ \oplus \epsilon^1 \cong \pi_2^*\gamma_3^{\Spin}$. Thus $\pi_1^*\gamma_2^+$ in fact has a canonical Spin structure, and $\pi_1$ factors canonically through $B\Spin(2)$. Given a map $f : \Sigma_{g,b} \to M$ with boundary condition $\pi_M \circ 
\delta$, there is a diagram
\begin{diagram}
 &  & \mathrm{Fr}_2(\bR^3) & \rEq & \mathrm{Fr}_2(\bR^3) & \rTo & \bR\bP^\infty\\
 &  & \dTo & & \dTo & & \dTo\\
\partial \Sigma_{g,b} & \rTo^\delta & \mathrm{Gr}_2^+(TM) & \rTo & \mathrm{Gr}_2^+(\gamma_3^{\Spin}) & \rTo & B\Spin(2)\\
\dInto & \ruTo^l[dotted]  & \dTo_{\theta_M \times \pi_M} &  & \dTo & & \dTo \\
\Sigma_{g,b} & \rTo^{\tau \times f} & BSO(2) \times M & \rTo^{\mathrm{Id} \times \mathfrak{s}} & BSO(2) \times B\Spin(3) & \rTo^{\pi_1} & BSO(2)
\end{diagram}
and hence a lift $l$ produces a Spin structure on $\Sigma_{g,b}$, and we have defined a map
$$\pi_0(\mathrm{Lifts}(\tau\times f, \theta_M \times \pi_M)) \lra \pi_0(\Bun_\partial(T\Sigma_{g,b}, \gamma_2^{\Spin};*)) =: \mathrm{Spin}(\Sigma_{g,b};*).$$
This is easily seen to be a bijection (as $\mathrm{Fr}_2(\bR^3) \simeq \bR\bP^3$, and the induced map $\mathrm{Fr}_2(\bR^3) \to \bR\bP^\infty$ between fibres of the rightmost two vertical maps in the diagram is the standard inclusion, hence 3-connected), and the action of a self-homotopy of $f$, that is, an element of $\pi_1(\map_\partial(\Sigma_{g,b}, M;\pi_M \circ b), \{f\})$, is trivial. Thus there is the exact sequence of sets
$$* \lra \mathrm{Spin}(\Sigma_{g,b};*) \lra \pi_0(\Bun_\partial(T\Sigma_{g,b}, \ell;\delta)) \lra H_2(M;\bZ) \lra *,$$
but a bundle map $T\Sigma_{g, b} \to \ell$ gives in particular a map $f$ and a lift of $\tau \times f$, so there is a function
$$\pi_0(\Bun_\partial(T\Sigma_{g,b}, \ell;\delta)) \lra \mathrm{Spin}(\Sigma_{g,b};*)$$
which splits it.
\end{proof}

\begin{prop}\label{prop:dim4}
Let $M$ be a simply-connected 4-manifold. Then there is a surjection
$$\pi_0(\Bun_\partial(T\Sigma_{g,b}, \ell;\delta)) \lra H_2(M;\bZ)$$
with preimage of $f$ in natural bijection with the set
$$P(f) := \{a \in \bZ\,\, |\,\, a \equiv \langle w_2(M), f \rangle \,\,\mathrm{mod}\,\, 2\}.$$
Thus $\pi_0(\Bun_\partial(T\Sigma_{g,b}, \ell;\delta))$ may be identified with a subset of $\bZ \times H_2(M;\bZ)$. The action of the mapping class group is trivial.
\end{prop}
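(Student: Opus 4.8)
The plan is to follow the scheme of the proof of Proposition~\ref{prop:dim3}. Instantiating diagram~(\ref{dig:1}) with $d=4$ gives, exactly as there, for each $\{f\}\in\mathrm{Lifts}(\tau,\pi_1;\pi_M\circ\delta)\simeq\map_\partial(\Sigma_{g,b},M;\pi_M\circ\delta)$ a homotopy cartesian square exhibiting $\pi_0(\mathrm{Lifts}(\tau\times f,\theta_M\times\pi_M;\delta))$ as the preimage of $[f]$ under a surjection $\pi_0(\Bun_\partial(T\Sigma_{g,b},\ell;\delta))\to H_2(M;\bZ)$, once quotiented by the action of $\pi_1(\map_\partial(\Sigma_{g,b},M;\pi_M\circ\delta),\{f\})$. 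These lift spaces are non-empty with no extra hypothesis: this time the fibre $\mathrm{Fr}_2(\bR^4)$ is the unit sphere bundle of $TS^3$, hence $1$-connected, so the obstruction to a lift lies in $H^2(\Sigma_{g,b},\partial\Sigma_{g,b};\pi_1\mathrm{Fr}_2(\bR^4))=0$. It then remains to identify $\pi_0(\mathrm{Lifts}(\tau\times f,\theta_M\times\pi_M;\delta))$ with $P(f)$, to check that the $\pi_1(\map_\partial)$-action on it is trivial, and to check that $\Gamma_{g,b}$ acts trivially on $\pi_0(\Bun_\partial(T\Sigma_{g,b},\ell;\delta))$.

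For the identification I would use the relative Euler number of the complementary bundle. Write $\nu\to\mathrm{Gr}_2^+(TM)$ for the oriented rank $2$ complement of $\ell$ inside $\pi_M^*TM$; a lift $l$ then gives a splitting $l^*\pi_M^*TM\cong T\Sigma_{g,b}\oplus l^*\nu$, and the standard boundary condition trivialises $l^*\nu$ near $\partial\Sigma_{g,b}$, so $l^*\nu$ has a relative Euler number $\mathrm{eul}(l)\in\bZ$, a homotopy invariant. Capping the boundary circles with discs (using that $M$ is simply-connected) extends the splitting over the closed surface $\bar\Sigma_g$; since all first Stiefel--Whitney classes vanish, $\chi(\bar\Sigma_g)$ is even, and capping does not change the Euler number, reducing the Whitney sum formula mod $2$ and evaluating on $[\bar\Sigma_g]$ gives $\mathrm{eul}(l)\equiv\langle w_2(M),f\rangle\bmod 2$; so $l\mapsto\mathrm{eul}(l)$ takes values in $P(f)$. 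The key geometric input is that the map $\mathrm{Fr}_2(\bR^4)\to BSO(2)=K(\bZ,2)$ classifying the restriction of $\nu$ is multiplication by $\pm2$ on $\pi_2$: indeed $\mathrm{Fr}_2(\bR^4)\cong S^3\times S^2$, with $\pi_2$ generated by the fibre sphere of $\mathrm{ev}_1\colon\mathrm{Fr}_2(\bR^4)\to S^3$, over which the complement bundle restricts to a copy of $TS^2$, of Euler number $\pm2$. Granting this, obstruction theory over the $2$-complex $\Sigma_{g,b}$ (the fibre being $1$-connected) shows that $\pi_0(\mathrm{Lifts}(\tau\times f,\theta_M\times\pi_M;\delta))$ is a torsor over $H^2(\Sigma_{g,b},\partial\Sigma_{g,b};\pi_2\mathrm{Fr}_2(\bR^4))\cong\bZ$, with respect to which $\mathrm{eul}$ intertwines the action with multiplication by $\pm2$ on $\bZ$; hence $\mathrm{eul}$ is injective, and its image is a full coset of $2\bZ$, which must be $P(f)$ as it is a non-empty subset of $P(f)$.

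The $\pi_1(\map_\partial(\Sigma_{g,b},M;\pi_M\circ\delta),\{f\})$-action is then trivial, since $\mathrm{eul}$ is equivariant for the trivial action on $\bZ$ — a self-homotopy of $f$ carries $l^*\nu$ with its boundary trivialisation to an isomorphic bundle — so the preimage of $[f]$ is precisely $P(f)$ and $\pi_0(\Bun_\partial(T\Sigma_{g,b},\ell;\delta))$ is identified with $\{(a,f)\in\bZ\times H_2(M;\bZ):a\in P(f)\}$. Finally $\Gamma_{g,b}$ acts trivially on this set: a diffeomorphism in $\Diff^+_\partial(\Sigma_{g,b})$ preserves the fundamental class $[\Sigma_{g,b},\partial\Sigma_{g,b}]$, hence fixes the $H_2(M;\bZ)$-coordinate, and fixes $\mathrm{eul}(l)=\langle e(l^*\nu),[\Sigma_{g,b},\partial\Sigma_{g,b}]\rangle$ by naturality of the Euler class.

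The main obstacle is the second step: verifying the $\pi_2$-computation and combining it correctly with obstruction theory over $\Sigma_{g,b}$ to obtain a bijection, together with handling the "standard position" conventions carefully enough that capping off with discs really produces the asserted splitting over $\bar\Sigma_g$, and hence the mod~$2$ congruence cutting out $P(f)$. The first and last steps are then formal, given the framework of diagram~(\ref{dig:1}) and its homotopy cartesian squares already set up in the proof of Proposition~\ref{prop:dim3}.
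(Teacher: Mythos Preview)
Your proposal is correct and follows essentially the same route as the paper: identify the complement bundle $\nu$ (the paper writes $V$), use its Euler class to define an integer invariant, observe that the set of lifts is a torsor for $\pi_2(\mathrm{Fr}_2(\bR^4))\cong\bZ$, and compute that the classifying map for $\nu$ sends this $\bZ$ to $2\bZ\subset\pi_2(BSO(2))$. The paper packages the last point via the homotopy equivalence $\mathrm{Gr}_2^+(\gamma_4^+)\simeq BSO(2)\times BSO(2)$, while you identify the fibre as $S^3\times S^2$ and recognise $\nu$ restricted to the $S^2$ as $TS^2$; these are equivalent computations.

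There is one genuine difference worth noting. For surjectivity onto $P(f)$ the paper argues directly: given any oriented plane bundle $V$ with $w_2(V)=f^*w_2(M)$, obstruction theory produces an isomorphism $T\Sigma_{g,b}\oplus V\cong f^*TM$ extending the standard one on the boundary, and hence a lift realising the prescribed Euler number. Your argument is slicker: since the map $\mathrm{eul}$ is injective and intertwines the $\bZ$-torsor structure with translation by $2\bZ$, its image is automatically a full coset of $2\bZ$, and since it lies in $P(f)$ (itself a coset of $2\bZ$) it must equal $P(f)$. This avoids the second obstruction-theory computation entirely. Conversely, the paper's approach to the congruence $\mathrm{eul}(l)\equiv\langle w_2(M),f\rangle$ is more direct than your capping argument---it simply observes that $c(l)$ reduces mod $2$ to $f^*w_2(M)$ as a relative class---so you might streamline that step. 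Your explicit treatment of the $\pi_1(\map_\partial)$-action and the $\Gamma_{g,b}$-action is more careful than the paper, which leaves both implicit.
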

\begin{proof}
Recall that $\mathrm{Fr}_2(\bR^4) $ is the homogenous space $ SO(2) \backslash SO(4)$, and that $\mathrm{Gr}_2^+(\gamma_4^{+})$ is the two-sided homotopy quotient
$$\mathrm{Gr}_2^+(\gamma_4^{+}) := SO(2)\backslash\!\!\backslash (SO(2) \backslash SO(4)) \hcoker SO(4) \simeq BSO(2) \times BSO(2).$$
A map giving this homotopy equivalence is given as follows: the space $\mathrm{Gr}_2^+(\gamma_4^{+})$ carries a sequence of bundles
$$0 \lra \pi_1^*\gamma_2^{+} \lra \pi_2^*\gamma_4^{+} \lra V \lra 0$$
so there is a map $c_V : \mathrm{Gr}_2^+(\gamma_4^{+}) \to BSO(2)$ classifying the oriented rank 2 bundle $V$. The map $\pi_1^*\gamma_2^{+} \times c_V : \mathrm{Gr}_2^+(\gamma_4^{+}) \to BSO(2) \times BSO(2)$ provides the homotopy equivalence.

Given a map $f : \Sigma_{g,b} \to M$ with boundary condition $\pi_M \circ 
\delta$, there is a diagram
\begin{diagram}
 &  & \mathrm{Fr}_2(\bR^4) & \rEq & \mathrm{Fr}_2(\bR^4) &  &\\
 &  & \dTo & & \dTo & & \\
\partial \Sigma_{g,b} & \rTo^\delta & \mathrm{Gr}_2^+(TM) & \rTo & \mathrm{Gr}_2^+(\gamma_4^{+}) & \rTo^{c_V} & BSO(2)\\
\dInto & \ruTo[dotted]^l  & \dTo_{\theta_M \times \pi_M} &  & \dTo_{\pi_1 \times \pi_2} & &\\
\Sigma_{g,b} & \rTo^{\tau \times f} & BSO(2) \times M & \rTo^{\mathrm{Id} \times \tau_M } & BSO(2) \times BSO(4) &  &
\end{diagram}
and hence composing a lift $l$ with $c_V$ produces an element $c(l) \in H^2(\Sigma_{g,b}, \partial \Sigma_{g,b};\bZ)$, that is, an integer. It reduces modulo 2 to $f^*w_2(M) \in H^2(\Sigma_{g, b}, \partial\Sigma_{g,b};\bZ/2) = \bZ/2$, which is a homotopy invariant of the map $f$. Furthermore, the set of homotopy classes of lifts is a $\pi_2(\mathrm{Fr}_2(\bR^4))$-torsor, and the image of
$$\bZ \cong \pi_2(\mathrm{Fr}_2(\bR^4)) \lra \pi_2(\mathrm{Gr}_2^+(\gamma_4^+)) \overset{c_V}\lra \pi_2(BSO(2)) = \bZ,$$
is the even integers. Thus we have defined an injective map
\begin{align}\label{eq:Dim4}
\pi_0(\mathrm{Lifts}(\tau\times f, \theta_M \times \pi_M;\delta)) &\lra P(f) :=
\begin{cases}
1+2\bZ &\text{if $f^*w_2(M) \neq 0$}\\
2\bZ &\text{if $f^*w_2(M) = 0$}
\end{cases} \subset \bZ\\
\nonumber l &\longmapsto c(l).
\end{align}
Given an oriented rank 2 vector bundle $V \to \Sigma_{g,b}$ trivialised over the boundary, standard obstruction theory shows that the only obstruction to finding an isomorphism $T\Sigma_{g,b} \oplus V \cong f^{*}TM$ (extending the standard isomorphism of trivial bundles on the boundary) is the class
$$f^*w_2(TM) - w_2(V)-w_2(T\Sigma_{g,b}) \in H^2(\Sigma_{g,b}, \partial \Sigma_{g,b};\bZ/2).$$
This shows that the map (\ref{eq:Dim4}) is also surjective.
\end{proof}

\begin{cor}\label{cor:pi0stab}
Let $M$ be simply-connected. If $\mathrm{dim}(M) = 3$ then choosing a Spin structure on $M$ gives a bijection
$$\pi_0(\mathcal{M}^{\theta_M}(\Sigma_{g, b};\delta)) \cong \pi_0(\mathcal{M}^{\Spin}(\Sigma_{g, b};*)) \times H_2(M;\bZ)$$
for all $g$ and $b$. Gluing surfaces along boundary components corresponds to adding homology classes and gluing Spin structures. 

If $\mathrm{dim}(M) = 4$ then there is a surjection
$$\pi_0(\mathcal{M}^{\theta_M}(\Sigma_{g, b};\delta)) \lra H_2(M;\bZ)$$
with preimage of $f$ given by the set $P(f)$, for all $g$ and $b$. Gluing surfaces $(a \in P(f), f)$ and $(b \in P(g), g)$ along boundary components has result $(a+b \in P(f+g), f+g)$.

If $\mathrm{dim}(M) \geq 5$ then there is a bijection
$$\pi_0(\mathcal{M}^{\theta_M}(\Sigma_{g, b};\delta)) \cong H_2(M;\bZ)$$
for all $g$ and $b$. Gluing surfaces along boundary components corresponds to adding homology classes.
\end{cor}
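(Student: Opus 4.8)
The plan is to combine the three preceding propositions with the general principle that for a topological group $G$ acting on a space $X$ one has $\pi_0(X \hcoker G) = \pi_0(X)/\pi_0(G)$, where $\pi_0(G)$ acts on $\pi_0(X)$ in the evident way; this is read off from the tail of the homotopy exact sequence of the fibration $X \to X\hcoker G \to BG$, using that $BG$ is connected. Applied to $X = \Bun_\partial(T\Sigma_{g,b}, \ell;\delta)$ and $G = \Diff^+_\partial(\Sigma_{g,b})$, this gives $\pi_0(\mathcal{M}^{\theta_M}(\Sigma_{g,b};\delta)) = \pi_0(\Bun_\partial(T\Sigma_{g,b}, \ell;\delta))/\Gamma_{g,b}$.

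First I would dispose of the three dimension ranges. For $\dim(M) \geq 5$, Proposition \ref{prop:5mfld} identifies $\pi_0(\Bun_\partial(T\Sigma_{g,b}, \ell;\delta))$ with $H_2(M;\bZ)$ and shows the $\Gamma_{g,b}$-action is trivial, so the quotient is again $H_2(M;\bZ)$. For $\dim(M) = 4$, Proposition \ref{prop:dim4} likewise gives a trivial $\Gamma_{g,b}$-action and identifies the set with the subset of $\bZ \times H_2(M;\bZ)$ fibred over $H_2(M;\bZ)$ with fibre $P(f)$ over $f$. For $\dim(M) = 3$, Proposition \ref{prop:dim3} identifies $\pi_0(\Bun_\partial(T\Sigma_{g,b}, \ell;\delta))$ with $\mathrm{Spin}(\Sigma_{g,b};*) \times H_2(M;\bZ)$, with $\Gamma_{g,b}$ acting by its usual action on the first factor and trivially on the second; applying the same homotopy-quotient principle to the Spin tangential structure gives $\pi_0(\mathcal{M}^{\Spin}(\Sigma_{g,b};*)) = \mathrm{Spin}(\Sigma_{g,b};*)/\Gamma_{g,b}$, so the product decomposition descends to the quotient.

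It then remains to verify the statements about gluing. Recall from Section \ref{sec:model} that gluing two surfaces along a common piece $\partial_0$ of their boundaries induces a map $\Bun_\partial(T\Sigma, \ell;\delta) \times \Bun_\partial(T\Sigma', \ell;\delta') \to \Bun_\partial(T(\Sigma \cup_{\partial_0}\Sigma');\delta\cup\delta')$ by literally gluing bundle maps, and hence a map on homotopy quotients. I would check that each identification used above is natural with respect to this operation. For the $H_2(M;\bZ)$-component the identification is ``cap off the remaining boundary with standard discs and take the image of the fundamental class'', and the relative fundamental class of $\Sigma \cup_{\partial_0}\Sigma'$ maps to the sum of those of $\Sigma$ and $\Sigma'$ under the Mayer--Vietoris comparison, so the homology classes add. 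For the Spin component in dimension $3$, gluing bundle maps into $\gamma_2^{\Spin}$ is by construction the gluing of Spin structures, so the claim is immediate once the bijection of Proposition \ref{prop:dim3} is unwound. For the integer invariant in dimension $4$, a lift $l$ is sent to $\langle c_V \circ l, [\Sigma_{g,b}, \partial\Sigma_{g,b}]\rangle$; since $c_V\circ(l\cup l')$ restricts to $c_V\circ l$ and $c_V\circ l'$ on the two pieces and the relative fundamental classes add, this invariant is additive, giving the asserted rule $(a,f),(b,g)\mapsto(a+b,f+g)$, and $a+b \in P(f+g)$ automatically since $P(\cdot)$ is cut out by a mod-$2$ linear condition.

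The dimension-range computations are formal; the real content, and the main obstacle I foresee, is the gluing bookkeeping: one must fix compatible conventions across Propositions \ref{prop:5mfld}--\ref{prop:dim4} --- the same standard capping discs, the same trivialisations of $f^*TM$ over the region being glued, and so on --- so that the naturality claims hold on the nose rather than merely up to canonical identification. Once that is done, each verification reduces to additivity of relative fundamental classes under gluing along a codimension-one submanifold together with the manifest compatibility of the obstruction-theoretic constructions with restriction to subsurfaces, and nothing further is needed.
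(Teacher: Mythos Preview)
Your proposal is correct and follows essentially the same approach as the paper: the descriptions of the sets come directly from Propositions \ref{prop:5mfld}--\ref{prop:dim4} together with the observation (stated at the start of Section \ref{sec:components}) that $\pi_0(\mathcal{M}^{\theta_M}(\Sigma_{g,b};\delta))$ is the quotient of $\pi_0(\Bun_\partial(T\Sigma_{g,b},\ell;\delta))$ by the mapping class group action, and the gluing statements are verified by the additivity of the relevant invariants (homology class, Spin structure, Euler number of the normal bundle). The paper's own proof is a terse three sentences saying exactly this; you have simply expanded the bookkeeping in more detail, which is entirely appropriate.
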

\begin{proof}
The previous three propositions provide the descriptions of the sets. Gluing together immersed surfaces certain adds the homology classes they represent. In dimension 3 the Spin structure induced on the union of two immersed surfaces is the union of the Spin structures on each, and in dimension 4 the Euler number of the normal bundle of an immersion of a union of two surfaces is the sum of the Euler numbers of the individual immersions.
\end{proof}

Using these calculations and the methods of \cite{R-WResolution}, we establish the following homology stability theorem for the spaces $\mathcal{M}^{\theta_M}(\Sigma_{g,b};\delta)$. We must first define certain basic stabilisation maps. Write
$$\alpha(g) : \mathcal{M}^{\theta_M}(\Sigma_{g, b};\delta) \lra \mathcal{M}^{\theta_M}(\Sigma_{g+1, b-1};\delta')$$
for any gluing map which adds on a pair of pants along the legs (this is only defined for $b \geq 2$). Write
$$\beta(g) : \mathcal{M}^{\theta_M}(\Sigma_{g, b};\delta) \lra \mathcal{M}^{\theta_M}(\Sigma_{g, b+1};\delta')$$
for any gluing map which adds on a pair of pants along the waist. Write
$$\gamma(g) : \mathcal{M}^{\theta_M}(\Sigma_{g, b};\delta) \lra \mathcal{M}^{\theta_M}(\Sigma_{g, b-1};\delta')$$
for any gluing map which adds on a disc along some boundary component.

\begin{thm}\label{thm:stability}
Any stabilisation map $\alpha(g)$ is a homology epimorphism in degrees
$$* \leq \begin{cases}
\tfrac{2g-1}{5} & \text{if $\dim(M) = 3$}\\
\tfrac{2g}{3} & \text{if $\dim(M) > 3$}
\end{cases}$$
and a homology isomorphism in one degree lower.

Any stabilisation map $\beta(g)$ is a homology epimorphism in degrees
$$* \leq \begin{cases}
\tfrac{2g-2}{5} & \text{if $\dim(M) = 3$}\\
\tfrac{2g-1}{3} & \text{if $\dim(M) > 3$}
\end{cases}$$
and a homology isomorphism in one degree lower.

Any stabilisation map $\gamma(g)$ is a homology isomorphism in the same range that $\beta(g)$ is an epimorphism.
\end{thm}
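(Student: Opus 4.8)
The plan is to verify the hypotheses of the abstract homology stability machine of \cite{R-WResolution} for the tangential structure $\theta_M : \mathrm{Gr}_2^+(TM) \to BSO(2)$, feeding in the computation of path components and gluing maps supplied by Corollary \ref{cor:pi0stab}. Recall that the main theorem of \cite{R-WResolution} asserts that, for a tangential structure $\theta$ which is \emph{spherical} (or at least once the relevant connectivity hypotheses on the structure are met), the stabilisation maps are homology epimorphisms in a range of the shape $* \le \tfrac{2g - c}{k}$, with slope governed by the ``genus'' of the tangential structure in the sense of that paper — concretely by how many handles one must glue on before the set of path components of the moduli space (with that boundary condition) stabilises and the gluing maps become suitably surjective. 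So the first step is to translate the three cases of Corollary \ref{cor:pi0stab} into the bookkeeping of \cite{R-WResolution}: when $\dim(M) \ge 5$ the set $\pi_0(\mathcal{M}^{\theta_M}(\Sigma_{g,b};\delta)) \cong H_2(M;\bZ)$ is already independent of $g$ and $b$ and the gluing maps are bijections on $\pi_0$, so $\theta_M$ behaves exactly like an orientation (slope $\tfrac13$, as for closed oriented surfaces); when $\dim(M) = 4$ the extra coordinate lies in $P(f) \subset \bZ$, which again stabilises instantly under gluing (adding a pair of pants does not change the set $P(f)$), so the slope is still $\tfrac13$; and when $\dim(M) = 3$ the extra factor is $\pi_0(\mathcal{M}^{\mathrm{Spin}}(\Sigma_{g,b};*))$, i.e.\ the Spin case, which is the prototypical example in \cite{R-WResolution} of a structure of genus one, giving the worse slope $\tfrac15$.

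Concretely, the steps are: (1) observe that $\theta_M$ factors the background data in the way required by \cite{R-WResolution} — this is automatic since $\mathrm{Gr}_2^+(TM) \to BSO(2)$ is a fibration with the appropriate fibre $\mathrm{Fr}_2(TM) \simeq M \times \mathrm{Fr}_2(\bR^{\dim M})$ up to the relevant connectivity; (2) for each dimension range, identify the ``connectivity of the graphical space'' or equivalently the relevant $\pi_0$-stabilisation statistic from Corollary \ref{cor:pi0stab}, noting in the $3$-dimensional case that gluing Spin structures is exactly the standard monoid structure and that the Spin moduli space is $1$-connective-at-infinity in the sense needed, while in dimensions $\ge 4$ the added coordinate is gluing-invariant so contributes nothing to the genus; (3) read off from the quantitative statements of \cite{R-WResolution} the epimorphism ranges for $\alpha(g)$ and $\beta(g)$ — the $\alpha$ map adding a pair of pants along the legs is the ``genus-increasing'' move and gets the stated range, while $\beta$ adding along the waist is one degree worse, as in the closed case; (4) deduce the isomorphism range in one degree lower by the standard argument (a map which is an epimorphism in degrees $\le n$ and whose iterate is too, together with the stability of the colimit, forces isomorphism in degrees $\le n-1$); and (5) handle $\gamma(g)$, the disc-gluing map, by the usual observation that $\gamma(g) \circ \beta(g)$ is homotopic to the identity-type map (gluing a pair of pants along the waist and then capping one leg is isotopic to doing nothing), so $\gamma(g)$ is split surjective on homology in the range $\beta(g)$ is surjective, and a five-lemma / colimit comparison upgrades this to an isomorphism in that same range.

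The main obstacle is step (2) in the $\dim(M) = 3$ case: one must check that the Spin factor genuinely fits the hypotheses of \cite{R-WResolution} as a structure contributing ``genus one'' and no more — i.e.\ that once one has glued on a single handle the induced map on $\pi_0$ of the relevant moduli space (with fixed boundary Spin structure) is surjective, and that the higher connectivity inputs ($1$-connectivity of the appropriate space of structures on the standard handle, so that the resolution used in \cite{R-WResolution} has the claimed connectivity) hold. This is where the extra $\tfrac15$ rather than $\tfrac13$ comes from and it is the only place the argument is not a formal consequence of ``$\theta_M$ looks like an orientation on $\pi_0$''. The remaining steps are bookkeeping: matching the indices $b \ge 2$ for $\alpha$, checking boundary conditions $\delta, \delta'$ are related by the gluing data as set up in Section \ref{sec:model}, and invoking the fact from the start of Section \ref{sec:components} that for simply-connected $M$ of dimension $\ge 3$ all boundary conditions are regularly homotopic so the choice of $\delta$ is immaterial.
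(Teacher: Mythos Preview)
Your proposal is correct and takes essentially the same approach as the paper: use Corollary~\ref{cor:pi0stab} to see that the $\pi_0$ behaviour of $\theta_M$ under stabilisation matches that of the tangential structure $BSO(2)\times M$ (when $\dim M \geq 4$) or $B\Spin(2)\times M$ (when $\dim M = 3$), and then invoke \cite{R-WResolution} to inherit the stability ranges already known for those simpler structures. The paper is simply more terse, citing \cite[\S 7]{R-WResolution} for the comparison principle and \cite[\S 7.5]{R-WResolution}, \cite[\S 2.4--2.6]{RWFramedPinMCG} for the numerical ranges, rather than re-deriving the slopes or the argument for $\gamma(g)$ from scratch.
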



\begin{proof}
By Corollary \ref{cor:pi0stab} if $\dim(M) \geq 4$ then all stabilisation maps induce bijections on sets of path components, and if $\dim(M)=3$ stabilisation maps induce surjections or bijections on sets of path components precisely when Spin structures do. Thus by \cite[\S 7]{R-WResolution} these cases have the same stability ranges as the tangential structures $BSO(2) \times M$ and $B\Spin(2) \times M$ respectively. The range in the first case has been calculated in \cite[\S 7.5]{R-WResolution}, and the range in the second case follows from \cite[\S 2.4-2.6]{RWFramedPinMCG}.
\end{proof}

To identify the stable homology, we apply the theorem of Galatius--Madsen--Tillmann--Weiss \cite[\S 7]{GMTW}. Write $\MT{SO}{2}$ for the Thom spectrum of the virtual bundle $-\gamma_2 \to BSO(2)$. The map of Thom spectra induced by $\theta_M : \mathrm{Gr}_2^+(TM) \to BSO(2)$ gives a map
$$\Omega^\infty\MTtheta_M \lra \Omega^\infty\MT{SO}{2}$$
and it has been calculated in \cite{MT} that there is a natural isomorphism
$$E : \pi_0(\Omega^\infty\MT{SO}{2}) \cong \bZ$$
under which the point represented by an oriented surface of genus $g$ maps to $1-g$. The evident composition defines a map
$$E : \pi_0(\Omega^\infty\MTtheta_M) \lra \bZ$$
and we let $\Omega^\infty_n \MTtheta_M$ be those path components which map to $n \in \bZ$ under $E$. In \cite[\S 5]{GMTW} the authors show that Pontrjagin--Thom theory provides a map
$$\alpha_M : \mathcal{M}^{\theta_M}(\Sigma_g) \lra \Omega^\infty_{1-g} \MTtheta_M,$$
and by Theorem \ref{thm:stability} and \cite[\S 11]{R-WResolution}, this map
is a homology isomorphism in degrees
$$* \leq \begin{cases}
\tfrac{2g-6}{5} & \text{if $\dim(M) = 3$}\\
\tfrac{2g-3}{3} & \text{if $\dim(M) > 3$},
\end{cases}$$
which establishes Theorem \ref{thm:Main}. 

\section{Calculations in Euclidean space}\label{sec:CalcEuc}

In this section we prove the results of Sections \ref{sec:3}--\ref{sec:2n}. 

\subsection{Immersions in $\bR^3$}

That $\mathcal{I}_g(\bR^3)$ has two components, distinguished by the Arf invariant of the associated Spin structures, follows from Proposition \ref{prop:dim3}. In this case $\mathrm{Gr}_2^+(\bR^3) \simeq S^2$ and the tautological bundle corresponds to $TS^2$. This is stably trivial and so $\MTtheta_{\bR^3} \simeq \mathbf{S}^{-2} \vee \mathbf{S}^0$. Hence the associated infinite loop space is $Q_0(S^0) \times \Omega^2_0 Q(S^0)$, which has trivial rational cohomology in positive degrees. The result now follows from Corollary \ref{cor:Main}.

The torsion calculation may be seen as follows. The fundamental group of this infinite loop space is $\bZ/2 \oplus \bZ/24$, by the well known homotopy groups of spheres in low degrees. This is its first integral homology too, by Hurewicz' theorem, and it follows from Corollary \ref{cor:Main} that this is the first homology of $\mathcal{I}_g^h(\bR^3)$ as long as $g \geq 6$. As the fibres of $\mathcal{I}_g^h(\bR^3) \to \mathcal{I}_g(\bR^3)$ are all connected, it follows that the map is surjective on first homology, and the statement about localisation follows from Theorem \ref{thm:Orbifold}.

\subsection{Immersions in $\bR^4$}

That $\mathcal{I}_g(\bR^4)$ has components indexed by $\bZ$, distinguished by the Euler number of the normal bundle of the immersion, follows from Proposition \ref{prop:dim4}. $\mathrm{Gr}_2^+(\bR^4)$ is a simply-connected 4-manifold (in fact, it is $S^2 \times S^2$ but we do not require this). Thus the cohomology of the spectrum $\MTtheta_{\bR^4}$ has a unique class in positive degree, $[\mathrm{Gr}_2^+(\bR^4)]^* \cdot u_{-2} \in H^2(\MTtheta_{\bR^4};\bQ)$, and so the rational cohomology of $\Omega^{\infty}_\bullet \MTtheta_{\bR^4}$ is
$$H^*(\Omega^{\infty}_\bullet \MTtheta_{\bR^4};\bQ) \cong \bQ[a_2]$$
a polynomial algebra on a single generator in degree 2. One can easily check that the natural map $\mathrm{Gr}_2^+(\bR^4) \to \mathrm{Gr}_2^+(\bR^\infty)$ pulls back the square of the Euler class to a non-trivial top-dimensional class, and so $a_2$ can be taken to be $\kappa_1$. The result now follows from Corollary \ref{cor:Main}.

\subsection{Immersions in $\bR^{2n+1}$, $2n+1 \geq 5$}

That $\mathcal{I}_g(\bR^{2n+1})$ is connected follows from Proposition \ref{prop:5mfld}. We have the calculation
$$H^*(\mathrm{Gr}_2^+(\bR^{2n+1});\bQ) = \bQ[e]/(e^{2n})$$
where $e \in H^2(\mathrm{Gr}_2^+(\bR^{2n+1});\bQ)$ is the Euler class. Thus
$$H^*(\Omega^{\infty}_\bullet \MTtheta_{\bR^{2n+1}};\bQ) \cong \bQ[\kappa_1, \kappa_2, ..., \kappa_{2n-2}]$$
and so the result now follows from Corollary \ref{cor:Main}.

\subsection{Immersions in $\bR^{2n}$, $2n \geq 6$}

That $\mathcal{I}_g(\bR^{2n+1})$ is connected follows from Proposition \ref{prop:5mfld}. By \cite[Theorem 2]{Lai} we have the calculation
$$H^*(\mathrm{Gr}_2^+(\bR^{2n});\bQ) = \bQ[e, \delta]/(\delta^2, e^n-2\delta e)$$
where $e$ is the Euler class and $\delta$ is Poincar{\'e} dual to the fundamental class of the submanifold $\bC\bP^{n-1} \hookrightarrow \mathrm{Gr}_2^+(\bR^{2n})$. In terms of these classes, the Euler class of the orthogonal complement to the tautological bundle is
$$\bar{e} = 2\delta - e^{n-1}.$$
We define a spectrum cohomology class $\bar{e} \cdot u_{-2} \in H^{2n-4}(\MTtheta_{\bR^{2n}};\bQ)$ and let $\Delta \in H^{2n-4}(\Omega^\infty_\bullet\MTtheta_{\bR^{2n}};\bQ)$ denote its cohomology suspension. Then 
$$H^*(\Omega^\infty_\bullet\MTtheta_{\bR^{2n}};\bQ) \cong \bQ[\kappa_1, \kappa_2, ..., \kappa_{2n-3}, \Delta]$$
(and $\kappa_i=0$ for $i > 2n-3)$, so the result now follows from Corollary \ref{cor:Main}.

\subsection{The spectral sequence for immersions in $\bR^{2n+1}$, $2n+1 \geq 5$}\label{sec:sseq}

In order to emphasise the nontriviality of the fibration
\begin{equation}\label{eq:ImmFib}
\mathrm{Imm}(\Sigma_g, M) \lra \mathcal{I}_g^h(M) \lra B\Diff^+(\Sigma_g),
\end{equation}
we will describe a computation of its Leray--Serre spectral sequence when $M = \bR^{2n+1}$ with $2n+1 \geq 5$, for rational cohomology in the stable range. We must first understand the cohomology of the space $\mathrm{Imm}(\Sigma_g, \bR^{2n+1})$, and the coefficient system it describes over $B\Diff^+(\Sigma_g)$.

Recall that the \emph{mapping class group} of a closed genus $g$ surface is defined to be
$$\Gamma_g := \pi_0(\Diff^+(\Sigma_g)),$$
and for $g \geq 2$ the map $B\Diff^+(\Sigma_g) \to B\Gamma_g$ is a homotopy equivalence \cite{EE}. The action of a diffeomorphism of $\Sigma_g$ on the homology of the surface makes $H_1(\Sigma_g ;\bZ)$ into a $\Gamma_g$-module, which we call $H$. We write $H_\bQ = H \otimes \bQ$. The naturality of Poincar{\'e} duality and the Universal Coefficient Theorem shows that $H \cong H^*$ as $\Gamma_g$-modules.

\begin{prop}
There is an isomorphism of $\Gamma_g$-modules
$$H^*(\mathrm{Imm}(\Sigma_g, \bR^{2n+1});\bQ) \cong \Lambda[x_{4n-3}, x_{4n-1}] \otimes \mathrm{Sym}^*(H_\bQ[4n-2])$$
where $H_\bQ[4n-2]$ denotes $H_\bQ$ considered as a vector space of grading $(4n-2)$, and $\mathrm{Sym}^*$ denotes the symmetric algebra on this graded vector space.
\end{prop}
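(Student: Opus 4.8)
The plan is to analyse the Hirsch--Smale model for $\mathrm{Imm}(\Sigma_g, \bR^{2n+1})$. By immersion theory the derivative map identifies this space, up to weak equivalence, with the space $\mathrm{Bun}_{inj}(T\Sigma_g, \epsilon^{2n+1})$ of bundle monomorphisms, equivalently the space of sections of the bundle over $\Sigma_g$ with fibre $\mathrm{Fr}_2(\bR^{2n+1}) = V_2(\bR^{2n+1})$, the Stiefel manifold of orthonormal $2$-frames, associated to the (trivialised) frame bundle of $T\Sigma_g$ via the $SO(2)$-action. Since $\bR^{2n+1}$ is parallelisable, this is just $\mathrm{map}(\Sigma_g, V_2(\bR^{2n+1}))$ twisted by the $SO(2)$-action, but as $\Sigma_g$ is a surface the twisting only affects the computation through the tangent bundle. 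The first step is therefore to record that $V_2(\bR^{2n+1})$ is $(2n-2)$-connected with $H^*(V_2(\bR^{2n+1});\bQ) \cong \Lambda[x_{2n-1}, x_{4n-1}]$ (for odd ambient dimension both generators survive rationally), so that rationally it is a product of two odd spheres $S^{2n-1} \times S^{4n-1}$, and to pass to the simpler model $\mathrm{map}(\Sigma_g, S^{2n-1}) \times \mathrm{map}(\Sigma_g, S^{4n-1})$ up to a controlled correction.

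Next I would compute $H^*(\mathrm{map}(\Sigma_g, S^{2k-1});\bQ)$ as a $\Gamma_g$-module for odd spheres. Rationally $S^{2k-1}$ is formal with a single generator in degree $2k-1$, and for a space with rational homotopy concentrated in a single odd degree the mapping space out of $\Sigma_g$ splits (rationally) as a product indexed by the rational homology of $\Sigma_g$: one gets a contribution $\Lambda[\text{gen}_{2k-1}]$ from $H_0(\Sigma_g)$, a contribution $\mathrm{Sym}^*(H_1(\Sigma_g;\bQ) \text{ in degree } 2k-2)$ from $H_1$, and a contribution in degree $2k-3$ from $H_2(\Sigma_g)$ --- but $2k-3$ is odd so that class is exterior; however for the case at hand one checks this $H_2$-contribution is absorbed. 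More precisely, using the evaluation fibration $\mathrm{map}_*(\Sigma_g, S^{2k-1}) \to \mathrm{map}(\Sigma_g, S^{2k-1}) \to S^{2k-1}$ and the cofibre sequence $\bigvee_{2g} S^1 \to \Sigma_g \to S^2$, the pointed mapping space has rational cohomology $\Lambda[H_1(\Sigma_g;\bQ) \text{ in degree } 2k-2, \ \text{class in degree } 2k-3]$; the degree $2k-3$ class and the $S^{2k-1}$ from evaluation together assemble, for $2k-1 = 2n-1$ and $2k-1 = 4n-1$, into the claimed $\Lambda[x_{4n-3}, x_{4n-1}]$ together with a single $\mathrm{Sym}^*(H_\bQ[4n-2])$ factor coming from the $S^{2n-1}$-mapping-space's degree-$(2n-2)$ part --- here one uses that $H_\bQ \cong H_\bQ^*$ as $\Gamma_g$-modules so the two copies of $H_1$ that would naively appear are identified, or one of them is killed. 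I would carry this out by identifying the degrees: the $S^{2n-1}$ factor contributes $x_{2n-1}$ (evaluation), $\mathrm{Sym}^*(H_\bQ[2n-2])$, and $x_{2n-3}$; the $S^{4n-1}$ factor contributes $x_{4n-1}$, $\mathrm{Sym}^*(H_\bQ[4n-2])$, and $x_{4n-3}$. The point of the proposition's stable-range statement is presumably that in the relevant range only $x_{4n-3}$, $x_{4n-1}$ and $\mathrm{Sym}^*(H_\bQ[4n-2])$ are detected, or that the lower-degree classes from the $S^{2n-1}$ factor combine into these --- I expect the precise bookkeeping of \emph{which} generators appear (and tracking the $\Gamma_g$-module structure, i.e.\ that the symmetric-algebra generators transform as $H_\bQ$ and the $x_i$ are invariant) to be the main obstacle, together with verifying that the $SO(2)$-twisting of the frame bundle does not disturb the rational answer (it cannot, since $H^*(BSO(2);\bQ) = \bQ[e]$ acts and $e$ restricts trivially enough on a surface, or because the Euler class of $T\Sigma_g$ is the only datum and it only shifts things outside the displayed range).

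Finally I would assemble: the Künneth theorem for the product $S^{2n-1} \times S^{4n-1}$ model gives the tensor product of the two factors' cohomologies, the $x_{2n-1}, x_{2n-3}$ classes are either reindexed into the statement or shown to lie above the stable range, and the resulting $\Gamma_g$-module is exactly $\Lambda[x_{4n-3}, x_{4n-1}] \otimes \mathrm{Sym}^*(H_\bQ[4n-2])$ since the exterior generators are in the image of $H^*(V_2(\bR^{2n+1});\bQ)$ hence $\Gamma_g$-invariant, while the polynomial part is built from $H^*(\Sigma_g;\bQ)$ functorially in $\Sigma_g$ and so carries the defining $\Gamma_g$-action on $H_\bQ$. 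The hardest point, as noted, is the careful degree- and module-bookkeeping identifying the correct set of surviving generators; everything else is a standard rational-homotopy computation of mapping spaces into products of odd spheres.
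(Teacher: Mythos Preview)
Your approach has a genuine error at its foundation. You claim that $H^*(V_2(\bR^{2n+1});\bQ) \cong \Lambda[x_{2n-1}, x_{4n-1}]$, asserting that ``for odd ambient dimension both generators survive rationally.'' This is backwards. The fibration $S^{2n-1} \to V_2(\bR^{2n+1}) \to S^{2n}$ is the unit tangent bundle of the \emph{even} sphere $S^{2n}$, whose Euler class is $2 \neq 0$; hence the transgression $d_{2n}$ in the Serre spectral sequence is nonzero and kills both the $(2n-1)$- and $2n$-dimensional classes. What survives is a single class in degree $4n-1$, so rationally $V_2(\bR^{2n+1}) \simeq_\bQ S^{4n-1}$ and $H^*(V_2(\bR^{2n+1});\bQ) = \Lambda[x_{4n-1}]$. (It is for \emph{even} ambient dimension that the Stiefel manifold is rationally a product of two spheres.) This miscomputation is precisely why you end up with a surplus of generators $x_{2n-3}$, $x_{2n-1}$, and $\mathrm{Sym}^*(H_\bQ[2n-2])$ that you then try to argue are ``absorbed'' or ``lie above the stable range.'' They are not absorbed and there is no stable range in the proposition: the statement is an exact isomorphism in all degrees, and the extraneous classes simply do not exist.

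Once $V_2(\bR^{2n+1}) \simeq_\bQ S^{4n-1}$ is in hand, your strategy of computing the cohomology of the mapping space is essentially what the paper does. The remaining issue you gloss over---the twisting by the nontrivial tangent bundle of the closed surface---is real and requires justification; the paper handles it by removing a disc (so that $T\Sigma_{g,1}$ becomes trivialisable), computing the pointed mapping space $\map_*(\Sigma_{g,1}, V_2(\bR^{2n+1})) \simeq (\Omega V_2(\bR^{2n+1}))^{2g}$, and then reassembling via two fibrations with fibres $\Omega^2 V_2(\bR^{2n+1})$ and base $V_2(\bR^{2n+1})$. The paper also explicitly checks that the failure of $\Gamma_{g,1}$-equivariance of the trivialisation-dependent identification is invisible on rational cohomology (because the $GL_2^+(\bR)$-action on $H^*(V_2(\bR^{2n+1});\bQ)$ is trivial), and that the relevant Serre spectral sequences collapse because the potential differentials would have to be $\Gamma_{g,1}$-invariant vectors in $H_\bQ$, of which there are none. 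Your handling of the $\Gamma_g$-module structure and the twisting is too vague to count as a proof, but these points are fixable; the Stiefel manifold computation is the actual mistake.
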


\begin{proof}
We first decompose $\mathrm{Imm}(\Sigma_g, \bR^{2n+1})$ using the fibration
\begin{equation}\label{eq:Dec1}
\mathrm{Imm}_\partial(D^2, \bR^{2n+1}) \lra \mathrm{Imm}(\Sigma_g, \bR^{2n+1}) \overset{\text{restrict}}\lra \mathrm{Imm}(\Sigma_g \setminus D^2, \bR^{2n+1}),
\end{equation}
which restricts an immersion to the complement of a disc. We then consider the fibration
\begin{equation}\label{eq:Dec2}
\mathrm{Imm}_*(\Sigma_g \setminus D^2, \bR^{2n+1}) \lra \mathrm{Imm}(\Sigma_g \setminus D^2, \bR^{2n+1}) \overset{\text{restrict}}\lra \Fr_2(\bR^{2n+1}),
\end{equation}
where the base is the space of linearly-independent 2-frames in $\bR^{2n+1}$, which takes the derivative of an immersion at a point $x_0 \in \partial(\Sigma_g \setminus D^2)$. The notation $\mathrm{Imm}_*(\Sigma_g \setminus D^2, \bR^{2n+1})$ means the space of immersions which agree with a fixed germ near $x_0$. Both fibrations admit an action of $\Diff^+(\Sigma_{g, 1}, \partial)$. In the first case the action is trivial on each fibre, and in the second case it is trivial on the base. If we write $\Gamma_{g,1} := \pi_0(\Diff^+(\Sigma_{g,1}, \partial))$, this makes the Serre spectral sequence a spectral sequence of $\Gamma_{g,1}$-modules in both cases.

The fibration
$$S^{2n-1} \lra \Fr_2(\bR^{2n+1}) \lra S^{2n}$$
is equivalent to the sphere bundle of $T S^{2n}$. As such it has non-trivial Euler class, and so the Serre spectral sequence has a non-trivial differential. Thus we have $H^*(\Fr_2(\bR^{2n+1});\bQ) = \Lambda[x_{4n-1}]$.

We now apply Hirsch--Smale theory. There is a map
\begin{align*}
\mathrm{Imm}_\partial(D^2, \bR^{2n+1}) &\lra \map_{\partial}(D^2, \Fr_2(\bR^{2n+1}))\\
i & \longmapsto \epsilon^2 = TD^2 \overset{Di}\to T\bR^{2n+1}
\end{align*}
to the space of maps which have some fixed behaviour on the boundary, and by Hirsch--Smale theory this is an equivalence. As $\Fr_2(\bR^{2n+1})$ is simply-connected, we can suppose that the boundary condition is the constant map to a basepoint, and so there is a homotopy equivalence $\mathrm{Imm}_\partial(D^2, \bR^{2n+1}) \simeq \Omega^2 \Fr_2(\bR^{2n+1})$. 
Together with the calculation above, we deduce that
$$H^*(\mathrm{Imm}_\partial(D^2, \bR^{2n+1});\bQ) = \Lambda[x_{4n-3}].$$

We now study the cohomology of $\mathrm{Imm}_*(\Sigma_{g,1}, \bR^{2n+1})$, which is a little more complicated. Choosing a trivialisation $\varphi : T\Sigma_{g,1} \cong \epsilon^2$ defines a map
\begin{align*}
T_\varphi : \mathrm{Imm}_*(\Sigma_{g,1}, \bR^{2n+1}) & \lra \map_*(\Sigma_{g,1}, \Fr_2(\bR^{2n+1}))\\
i &\longmapsto \epsilon^2 \cong_{\varphi^{-1}} T\Sigma_{g,2} \overset{Di}\to T\bR^{2n+1}
\end{align*}
and by Hirsch--Smale theory this is a homotopy equivalence. Both sides have an action of the group $\Diff^+(\Sigma_{g,1}, \partial)$, but the map $T_\varphi$ is not equivariant for this action. To describe the failure of equivariance, consider the map
\begin{align*}
\psi : \Diff^+(\Sigma_{g,1}, \partial) & \lra \map_\partial(\Sigma_{g,1}, GL^+_2(\bR))\\
f &\longmapsto \epsilon^2 \cong_{\varphi^{-1}} T\Sigma_{g,1} \overset{f}\to T\Sigma_{g,1} \cong_\varphi \epsilon^2.
\end{align*}
Let us write $f \cdot -$ for the action of $f$ on $\mathrm{Imm}_*(\Sigma_{g,1}, \bR^{2n+1})$, $f * -$ for the action on $\map_*(\Sigma_{g,1}, \Fr_2(\bR^{2n+1}))$, and $\psi(f) \circ -$ for the action of $\psi(f)$ on $\map_*(\Sigma_{g,1}, \Fr_2(\bR^{2n+1}))$ induced by the action of $GL^+_2(\bR)$ on $\Fr_2(\bR^{2n+1})$. Then we have the relationship $T_\varphi(f \cdot i) = \psi(f) \circ (f * T_\varphi(i))$ between these actions.

We have the homotopy equivalence
$$\map_*(\Sigma_{g,1}, \Fr_2(\bR^{2n+1})) \simeq [\Omega \Fr_2(\bR^{2n+1})]^{2g}$$
and so $H^*(\map_*(\Sigma_{g,1}, \Fr_2(\bR^{2n+1}));\bQ) = \bQ[a^1_{4n-2}, b^1_{4n-2}, ..., a^g_{4n-2}, b^g_{4n-2}]$ and the action $f*-$ of a diffeomorphism is the usual symplectic action on the variables $a^i, b^i$. Thus $H^*(\map_*(\Sigma_{g,1}, \Fr_2(\bR^{2n+1}));\bQ) = \mathrm{Sym}^*(H_\bQ[4n-2])$ as a $\Gamma_{g,1}$-module.

This computes $H^*(\mathrm{Imm}_*(\Sigma_{g,1}, \bR^{2n+1});\bQ)$ as a ring, but we must compute the action $f \cdot -$ as well. By the formula above, this corresponds to computing the action $\psi(f) \circ -$ on $H^*(\map_*(\Sigma_{g,1}, \Fr_2(\bR^{2n+1}));\bQ)$ induced by $\mu : GL^+_2(\bR) \times \Fr_2(\bR^{2n+1}) \to \Fr_2(\bR^{2n+1})$. However for dimension reasons the action $\mu$ is trivial on homology and it is easy to deduce from this that $\psi(f) \circ -$ acts trivially too. Thus despite $T_\varphi$ not being an equivariant map, the map $(T_\varphi)^*$ is a map (in fact, an isomorphism) of $\Gamma_{g,1}$-modules.

Consider the Serre spectral sequence for the fibration (\ref{eq:Dec2}), which has the form
$$\mathrm{Sym}^*(H_\bQ[4n-2]) \otimes \Lambda[x_{4n-1}] \Longrightarrow H^*(\mathrm{Imm}(\Sigma_g \setminus D^2, \bR^{2n+1});\bQ).$$
The only possible differential is $d_{4n-1}$, and it is determined by
$$d_{4n-1} : E^{0, 4n-2} = H_\bQ \lra E^{4n-1, 0} = \bQ$$
but is also a map of $\Gamma_{g,1}$-modules, so must be zero (it corresponds to an invariant vector in $H^*_\bQ \cong H_\bQ$). Thus the spectral sequence collapses, and one may check that there are no extensions.

Next we consider the Serre spectral sequence for the fibration (\ref{eq:Dec1}), which has the form
$$\Lambda[x_{4n-3}] \otimes \mathrm{Sym}^*(H_\bQ[4n-2]) \otimes \Lambda[x_{4n-1}] \Longrightarrow H^*(\mathrm{Imm}(\Sigma_g, \bR^{2n+1});\bQ).$$
The only possible differential is $d_{4n-2}$, and it is determined by
$$d_{4n-2} : E^{0, 4n-1} = \bQ \lra E^{4n-2, 0} = H_\bQ$$
but this must again be trivial as it corresponds to a $\Gamma_{g,1}$-invariant vector in $H_\bQ$. Thus this spectral sequence also collapses, and again one may check that there are no extensions. This determines $H^*(\mathrm{Imm}(\Sigma_g, \bR^{2n+1});\bQ)$ as a $\Gamma_{g,1}$-module, and the final step is to observe that the natural homomorphism $\Gamma_{g,1} \to \Gamma_g$ is surjective, and so we have determined the $\Gamma_g$-module structure.
\end{proof}

Hence (for $g \geq 2$) the Leray--Serre spectral sequence for the fibration (\ref{eq:ImmFib}) has the form
\begin{equation}\label{eq:sseq}
H^*(\Gamma_g ; \Lambda[x_{4n-3}, x_{4n-1}] \otimes \mathrm{Sym}^*(H_\bQ[4n-2])) \Longrightarrow H^*(\mathcal{I}^h_g(\bR^{2n+1});\bQ).
\end{equation}
To determine the $E_2$-term of this spectral sequence we must compute the cohomology of $\Gamma_g$ with coefficients in $\mathrm{Sym}^q(H_\bQ)$, and it is very useful to have the multiplicative structure, induced by $\mathrm{Sym}^p(H_\bQ) \otimes \mathrm{Sym}^q(H_\bQ) \to \mathrm{Sym}^{p+q}(H_\bQ)$, available to us too.

\begin{prop}
There is an isomorphism of bigraded algebras
$$\bigoplus_{p,q} H^p(\Gamma_g;\mathrm{Sym}^q(H_\bQ)) \cong \bQ[\kappa_1, \kappa_2, ...] \otimes \Lambda[\ell_3, \ell_5, \ell_7, ...],$$
in total degrees $p+q \leq \tfrac{2g-3}{3}$, where $\kappa_i$ has bi-degree $(p,q)=(2i,0)$ and $\ell_{2i+1}$ has bi-degree $(p,q)=(2i+1, 1)$.
\end{prop}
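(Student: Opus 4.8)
The plan is to compute $\bigoplus_{p,q} H^p(\Gamma_g;\mathrm{Sym}^q(H_\bQ))$ by a stability-plus-GMTW argument applied to a cleverly chosen tangential structure, and to identify the answer with the asserted free-graded-commutative algebra. The key observation is that $\mathrm{Sym}^*(H_\bQ)$ arises as the cohomology of a space naturally fibred over $B\Gamma_g$, so that $\bigoplus_q H^p(\Gamma_g;\mathrm{Sym}^q(H_\bQ))$ is the $E_2$-term (equivalently, after collapse, the abutment) of a Leray--Serre spectral sequence. Concretely, up to regrading I would take the total space to be the moduli space of surfaces equipped with a map to a space whose loop space has rational cohomology $\mathrm{Sym}^*(H_\bQ)$ once one keeps track of the $\Gamma_g$-action; the cleanest choice is $\mathcal{M}^{\theta_M}(\Sigma_g)$ for $M = \bR^{2n+1}$ with $2n+1 \geq 5$, whose cohomology we have already computed two different ways. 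Indeed, the previous proposition expresses $H^*(\mathrm{Imm}(\Sigma_g,\bR^{2n+1});\bQ)$ as $\Lambda[x_{4n-3},x_{4n-1}] \otimes \mathrm{Sym}^*(H_\bQ[4n-2])$, and Theorem \ref{thm:Main} together with Corollary \ref{cor:Main} identifies $H^*(\mathcal{I}^h_g(\bR^{2n+1});\bQ)$ in the stable range with $H^*(\Omega^\infty_\bullet\MTtheta_{\bR^{2n+1}};\bQ) \cong \bQ[\kappa_1,\kappa_2,\ldots,\kappa_{2n-2}]$.

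First I would run the Leray--Serre spectral sequence (\ref{eq:sseq}) for the fibration (\ref{eq:ImmFib}) with $M=\bR^{2n+1}$. Its $E_2$-page is $H^*(\Gamma_g;\Lambda[x_{4n-3},x_{4n-1}] \otimes \mathrm{Sym}^*(H_\bQ[4n-2]))$, which since $x_{4n-3}$ and $x_{4n-1}$ carry trivial $\Gamma_g$-action factors as $\Lambda[x_{4n-3},x_{4n-1}] \otimes \bigl(\bigoplus_{p,q} H^p(\Gamma_g;\mathrm{Sym}^q(H_\bQ))\bigr)$ with $\mathrm{Sym}^q$ placed in $q$-grading $(4n-2)q$. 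The abutment is $\bQ[\kappa_1,\ldots,\kappa_{2n-2}]$ in the stable range. Now I would choose $n$ large relative to the total degree range $p+q \leq \tfrac{2g-3}{3}$ of interest: then the classes $x_{4n-3}$, $x_{4n-1}$, and the generators of $\mathrm{Sym}^q(H_\bQ)$ all sit in arbitrarily high degree, so within any fixed range the only way the spectral sequence can abut to a polynomial algebra on even generators $\kappa_1,\ldots,\kappa_{2n-2}$ (the $\kappa_i$ for $i \leq 2n-3$ surviving, $\kappa_{2n-2}$ present as well) is for a specific cascade of differentials to kill everything built from $x_{4n-3}$, $x_{4n-1}$, and the symmetric generators, leaving only the $\kappa_i$. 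Tracking which differentials are forced — $d(x_{4n-1})$ must hit the degree-$(4n-1,1)$ classes, i.e. a copy of $H_\bQ$ sitting in $H^{4n-1}(\Gamma_g;\mathrm{Sym}^1 H_\bQ)$, and $x_{4n-3}$ pairs off similarly, while products of symmetric generators get cancelled inductively — one reads off that $\bigoplus_{p,q} H^p(\Gamma_g;\mathrm{Sym}^q H_\bQ)$ must be exactly $\bQ[\kappa_1,\kappa_2,\ldots] \otimes \Lambda[\ell_3,\ell_5,\ldots]$ with $\ell_{2i+1}$ in bidegree $(2i+1,1)$ (the "$x_{4n-1}$-transgression image" reindexed), in total degrees $p+q \leq \tfrac{2g-3}{3}$. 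An alternative, more robust route: let $n \to \infty$ to obtain a statement about $H^*(\Gamma_g;\mathrm{Sym}^*(H_\bQ))$ as the cohomology of the section space / homotopy fibre directly, and identify the generators $\ell_{2i+1}$ with the classes detected by the composite $B\Gamma_{g,1} \to B\Gamma_g$ together with the tautological $\epsilon^2$-framing data, which are the standard "generalised Mumford--Morita--Miller classes with one $H$-coefficient."

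The main obstacle is making the deduction of the differentials rigorous rather than merely consistent: I must argue that the spectral sequence genuinely collapses at the expected page after the forced initial differentials, and in particular that no exotic extension or later differential can occur within the stable range. The leverage is the sparsity coming from $n$ large — all classes other than the $\kappa_i$ and the $\ell$-classes they multiply live in degrees $\geq 4n-3$, which exceeds $\tfrac{2g-3}{3}$ once $g$ is taken large enough relative to $n$, so in the relevant total-degree window the spectral sequence has very few nonzero entries and the multiplicative structure (the $\kappa_i$ are permanent cycles, being pulled back from $B\Gamma_g$-cohomology that we already understand via Madsen--Weiss) rigidifies everything. I would also need to check that the identification of the surviving bidegree-$(2i+1,1)$ classes $\ell_{2i+1}$ is independent of $n$ and compatible with the algebra structure $\mathrm{Sym}^p \otimes \mathrm{Sym}^q \to \mathrm{Sym}^{p+q}$; this follows because the relevant classes are detected already on $B\Gamma_{g,1}$ using a fixed tangential framing, independent of the ambient $\bR^{2n+1}$, so stabilising $n$ only shifts internal degrees and does not alter the algebra. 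Finally, a short cross-check against the $p=0$ line, where $H^0(\Gamma_g;\mathrm{Sym}^q H_\bQ)$ is the $\Gamma_g=\mathrm{Sp}_{2g}(\bZ)$-invariants of $\mathrm{Sym}^q$ of the standard representation, namely $\bQ$ for $q=0$ and $0$ for $0 < q \leq \tfrac{2g-3}{3}$ (no invariants in $\mathrm{Sym}^q$ of the standard symplectic representation for $q$ odd, and none in this range for $q$ even), confirms the answer has no generators with $q$-degree contribution other than through the $\ell$'s and $\kappa$'s as claimed.
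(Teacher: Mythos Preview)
Your approach reverses the logical flow of the paper and does not close. In the paper this proposition is an \emph{input} used to determine the differentials in the spectral sequence (\ref{eq:sseq}); you are attempting to read off the $E_2$-page from the already-known abutment, which is the inverse spectral-sequence problem and is ill-posed without independent control of $E_2$.

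Concretely, your ``choose $n$ large'' manoeuvre fails. In (\ref{eq:sseq}) the group $H^p(\Gamma_g;\mathrm{Sym}^q(H_\bQ))$ sits at position $(p,(4n-2)q)$, hence in total degree $p+(4n-2)q$. If $n$ is large relative to the stable range $(2g-3)/3$, then for every $q\geq 1$ these entries lie outside the range where the abutment is known, so the spectral sequence tells you nothing about them; you recover only Madsen--Weiss on the $q=0$ line. If instead you keep $n$ fixed (say $n=2$), you need $p+6q\leq (2g-3)/3$ to stay in range, which is much weaker than the stated bound $p+q\leq (2g-3)/3$, and even there you must \emph{prove} that only one pattern of differentials is consistent with the abutment---your sketch of ``tracking which differentials are forced'' presupposes the very structure of $E_2$ you are trying to establish.

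The paper's proof is genuinely different: it takes Looijenga's computation of $H^*(\Gamma_g;\mathrm{Sym}^s H_\bQ)$ as an external input giving the $\bQ[\kappa_1,\kappa_2,\ldots]$-module structure, verifies via a generating-function identity that this matches the claimed description additively, and then pins down the multiplicative structure by running a spectral-sequence argument with target $K(\bZ,3)$ rather than $\bR^{2n+1}$. For that target the fibre is $K(\bZ,3)\times K(H,2)\times K(\bZ,1)$, so $\mathrm{Sym}^q(H_\bQ)$ appears in fibre degree $2q$ (the minimal possible regrading), the abutment is known by Cohen--Madsen stability and GMTW, and with Looijenga's ranks in hand a dimension count forces collapse. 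The missing ingredient in your argument is precisely an independent computation of the additive structure.
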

\begin{proof}[Proof sketch]
By a theorem of Looijenga \cite[Example 1]{LooijengaAJ} there is an isomorphism
$$H^*(\Gamma_g;\mathrm{Sym}^s (H_\bQ)) \cong \bQ[\kappa_1, \kappa_2, ...] \otimes \Sigma^{s(s+2)} \bQ[x_2, ..., x_{2s}]$$
of $\bQ[\kappa_1, \kappa_2, ...]$-modules in the stable range, i.e.\ the stable cohomology is a free $\bQ[\kappa_1, \kappa_2, ...]$-module on one generator for each monomial of $\bQ[x_2, ..., x_{2s}]$, with suitably shifted degrees. Some dilligent work with generating functions\footnote{As a hint to the reader, after a change of variables the generating functions for the ranks as a $\bQ[\kappa_1, \kappa_2, ...]$-module of Looijenga's and our description become the left- and right-hand sides respectively of the following identity
$$\sum_{k=0}^\infty q^{k(k-1)/2} \frac{1}{1-q} \cdots \frac{1}{1-q^k} \cdot q^kx^k = \prod_{k=1}^\infty(1+q^kx),$$ which holds as the right-hand side is nothing but the $q$-Pochhammer symbol $(-x;q)_\infty$ and the left-hand side is its well-known series expansion.}---which we omit---shows that the statement of the proposition is true as $\bQ[\kappa_1, \kappa_2, ...]$-modules: there are the correct number of free generators in each degree.

What remains is to show that the multiplicative structure is as claimed, which we will just sketch briefly: the argument is analogous to that of Ebert and the author \cite{ERW10} where we compute the multiplicative structure of $H^*(\Gamma_g; \Lambda^*H_\bQ)$ in the stable range. We consider the moduli space $\mathcal{S}_{g, b} := \map_\partial(\Sigma_{g, b}, K(\bZ, 3);*) \hcoker \Diff^+_\partial(\Sigma_{g,b})$ which classifies surface bundles with a third integral cohomology class on the total space. By a theorem of Cohen and Madsen \cite{CM, CM2}, and the extension to closed surfaces by the author \cite{R-WResolution}, these spaces have homological stability with homology independent of $g$ in degrees $* \leq \frac{2g-3}{3}$. The stable homology is that of the infinite loop space $\Omega^\infty_\bullet(\MT{SO}{2} \wedge K(\bZ, 3)_+)$. Standard methods give the calculation
$$H^*(\Omega^\infty_\bullet(\MT{SO}{2} \wedge K(\bZ, 3)_+);\bQ) \cong \bQ[\kappa_1, \kappa_2, ...] \otimes \Lambda[l_1, l_3, l_5, ...],$$
for some classes $l_i$ of degree $i$. Next we observe that there is an unnatural decomposition
$$\map(\Sigma_g, K(\bZ, 3)) \simeq K(\bZ, 3) \times K(H, 2) \times K(\bZ, 1)$$
as the space of maps into an Eilenberg--MacLane space is again a product of Eilenberg--MacLane spaces, and it is easy to calculate its homotopy groups. Thus the Serre spectral sequence for the fibration
$$\map(\Sigma_g, K(\bZ, 3)) \lra \mathcal{S}_g \lra B\Diff^+(\Sigma_g)$$
has the form
$$H^*(\Gamma_g; \mathrm{Sym}^*(H_\bQ[2])) \otimes \Lambda[x_1, x_3] \Longrightarrow \bQ[\kappa_1, \kappa_2, ...] \otimes \Lambda[l_1, l_3, l_5, ...]$$
in total degrees $* \leq \frac{2g-3}{3}$. By counting dimensions we see that this spectral sequence must collapse in the stable range, and so the multiplicative structure is as claimed (with $\ell_{2i+3}$ corresponding to $l_{2i+5}$).
\end{proof}

Thus the spectral sequence (\ref{eq:sseq}) has the form
$$\bQ[\kappa_1, \kappa_2, ...] \otimes \Lambda[x_{4n-3}, x_{4n-1}, \ell_{4n+1}, \ell_{4n+3}, ...] \Longrightarrow H^*(\mathcal{I}^h_g(\bR^{2n+1});\bQ),$$
in the stable range, where $\ell_{4n+1 + 2k}$ has bi-degree $(p, q)=(3+2k, 4n-2)$. A chart of the spectral sequence is then as shown in Figure \ref{fig:1}, where each dot represents a free $\bQ[\kappa_1, \kappa_2, ...]$-module generator. 
\begin{figure}[h]
\centering
\includegraphics[bb = 0 0 357 212]{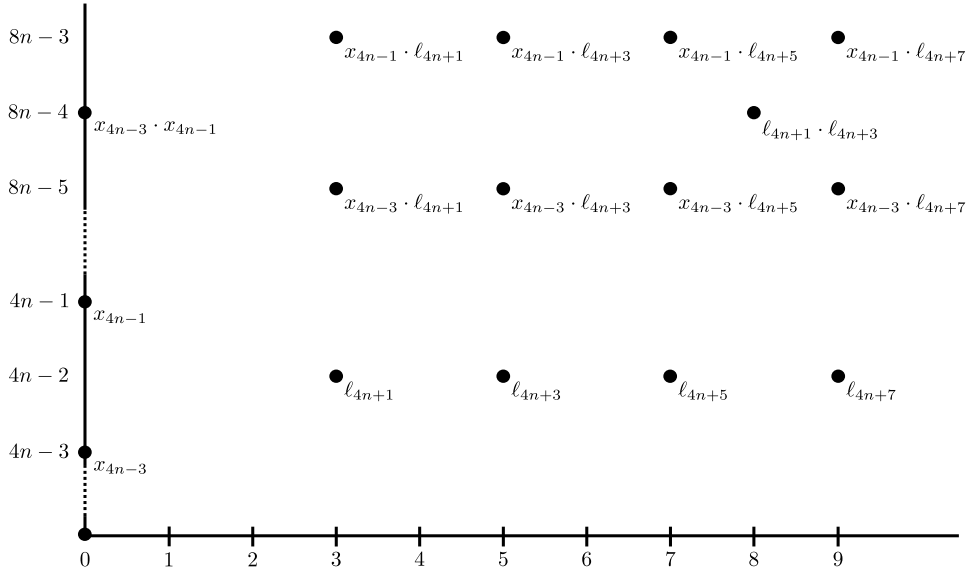}
\caption{Chart of the spectral sequence (\ref{eq:sseq}).}\label{fig:1}
\end{figure}
By the result of Section \ref{sec:2np1}, we know that
$$H^*(\mathcal{I}^h_g(\bR^{2n+1});\bQ) \cong \bQ[\kappa_1, ..., \kappa_{2n-2}]$$
in degrees $* \leq \tfrac{2g-3}{3}$. If we suppose that $g \gg n$, then the classes $\kappa_{2n-1}, \kappa_{2n}, ...$ cannot survive the spectral sequence. We see from the chart that the only possible pattern of differentials  in the stable range is
\begin{align*}
d_{4n-2}(x_{4n-3}) &= \kappa_{2n-1}\\
d_{4n-1}(\ell_{4n+1 + 2k}) &= \kappa_{2n+1+k} \quad\text{for}\,\, k \geq 0\\
d_{4n}(x_{4n-1}) &= \kappa_{2n}
\end{align*}
up to units.

\bibliographystyle{amsalpha}
\bibliography{MainBib}

\def\cprime{$'$}
\providecommand{\bysame}{\leavevmode\hbox to3em{\hrulefill}\thinspace}
\providecommand{\MR}{\relax\ifhmode\unskip\space\fi MR }
\providecommand{\MRhref}[2]{%
  \href{http://www.ams.org/mathscinet-getitem?mr=#1}{#2}
}
\providecommand{\href}[2]{#2}
\begin{thebibliography}{GMTW09}

\bibitem[CM09]{CM}
Ralph Cohen and Ib~Madsen, \emph{Surfaces in a background space and the
  homology of mapping class group}, Proc. Symp. Pure Math. \textbf{80} (2009),
  no.~1, 43--76.

\bibitem[CM10]{CM2}
\bysame, \emph{Stability for closed surfaces in a background space},
  arXiv:1002.2498, 2010.

\bibitem[CMM91]{CMM}
Vicente Cervera, Francisca Mascar{\'o}, and Peter~W. Michor, \emph{The action
  of the diffeomorphism group on the space of immersions}, Differential Geom.
  Appl. \textbf{1} (1991), no.~4, 391--401. \MR{1244452 (94k:58018)}

\bibitem[EE69]{EE}
C.J. Earle and J.~Eells, \emph{A fibre bundle description of {T}eichmueller
  theory}, J. Differential Geometry \textbf{3} (1969), 19--43.

\bibitem[ERW10]{ERW10}
Johannes Ebert and Oscar Randal-Williams, \emph{Stable cohomology of the
  universal {P}icard varieties and the extended mapping class group},
  arXiv:1012.0901, 2010.

\bibitem[GMTW09]{GMTW}
S{\o}ren Galatius, Ib~Madsen, Ulrike Tillmann, and Michael Weiss, \emph{The
  homotopy type of the cobordism category}, Acta Math. \textbf{202} (2009),
  no.~2, 195--239.

\bibitem[Hir59]{Hirsch}
Morris~W. Hirsch, \emph{Immersions of manifolds}, Trans. Amer. Math. Soc.
  \textbf{93} (1959), 242--276. \MR{0119214 (22 \#9980)}

\bibitem[Joh80]{Johnson}
Dennis Johnson, \emph{Spin structures and quadratic forms on surfaces}, J.
  London Math. Soc. (2) \textbf{22} (1980), no.~2, 365--373.

\bibitem[Lai74]{Lai}
Hon~Fei Lai, \emph{On the topology of the even-dimensional complex quadrics},
  Proc. Amer. Math. Soc. \textbf{46} (1974), 419--425. \MR{0358846 (50
  \#11305)}

\bibitem[Loo96]{LooijengaAJ}
Eduard Looijenga, \emph{Stable cohomology of the mapping class group with
  symplectic coefficients and of the universal {A}bel-{J}acobi map}, J.
  Algebraic Geom. \textbf{5} (1996), no.~1, 135--150. \MR{1358038 (97g:14026)}

\bibitem[MM05]{MichorMumford}
Peter~W. Michor and David Mumford, \emph{Vanishing geodesic distance on spaces
  of submanifolds and diffeomorphisms}, Doc. Math. \textbf{10} (2005), 217--245
  (electronic). \MR{2148075 (2006g:58013)}

\bibitem[MT01]{MT}
Ib~Madsen and Ulrike Tillmann, \emph{The stable mapping class group and
  {$Q(\Bbb C \, \Bbb P^\infty_+)$}}, Invent. Math. \textbf{145} (2001), no.~3,
  509--544.

\bibitem[Mum83]{Mumford}
David Mumford, \emph{Towards an enumerative geometry of the moduli space of
  curves}, Arithmetic and geometry, {V}ol. {II}, Progr. Math., vol.~36,
  Birkh\"auser Boston, Boston, MA, 1983, pp.~271--328.

\bibitem[MW07]{MW}
Ib~Madsen and Michael Weiss, \emph{The stable moduli space of {R}iemann
  surfaces: {M}umford's conjecture}, Ann. of Math. (2) \textbf{165} (2007),
  no.~3, 843--941.

\bibitem[NN57]{NewlanderNirenberg}
A.~Newlander and L.~Nirenberg, \emph{Complex analytic coordinates in almost
  complex manifolds}, Ann. of Math. (2) \textbf{65} (1957), 391--404.
  \MR{0088770 (19,577a)}

\bibitem[RW09]{R-WResolution}
Oscar Randal-Williams, \emph{Resolutions of moduli spaces and homological
  stability}, arXiv:0909.4278v3, 2009.

\bibitem[RW10]{RWFramedPinMCG}
\bysame, \emph{Homology of the moduli spaces and mapping class groups of
  framed, $r$-{S}pin and {P}in surfaces}, arXiv:1001.5366, 2010.

\bibitem[Swa60]{Swan}
Richard~G. Swan, \emph{The nontriviality of the restriction map in the
  cohomology of groups.}, Proc. Amer. Math. Soc. \textbf{11} (1960), 885--887.
  \MR{0124050 (23 \#A1370)}

\end{thebibliography}


\def\cprime{$'$}

\end{document}